\tikzstyle{bag} = [align=center]
\newcommand{\numSamples}{N}
\newcommand{\sampleSet}{X}
\crefname{hypothesis}{Hypothesis}{Hypotheses}
 \title{Low Rank Saddle Free Newton: A Scalable Method for Stochastic Nonconvex Optimization\thanks{This research was partially funded by the U.S. Department of Energy under ASCR awards DE-SC0019303 and DE-SC0021239 and ARPA-E award DE-AR0001208; the U.S. Department of Defense under MURI award FA9550-21-1-0084; the Simons Foundation under award 560651; and the National Science Foundation under award DMS-2012453.}}
\author{Thomas O'Leary-Roseberry\thanks{Oden Institute for Computational Engineering \& Sciences,
  The University of Texas at Austin, Austin, TX
  (\email{tom@oden.utexas.edu}, \email{nalger@oden.utexas.edu}).}
\and Nick Alger\footnotemark[2]
\and Omar Ghattas \thanks{Oden Institute for Computational Engineering \& Sciences,
  Department of Mechanical Engineering, and Department of Geological
  Sciences, The University of Texas at Austin, Austin, TX
  (\email{omar@oden.utexas.edu}).}}
\begin{document}

\maketitle

\begin{abstract}
In modern deep learning, highly subsampled stochastic approximation (SA) methods are preferred to sample average approximation (SAA) methods because of large data sets as well as generalization properties. Additionally, due to perceived costs of forming and factorizing Hessians, second order methods are not used for these problems. In this work we motivate the extension of Newton methods to the SA regime, and argue for the use of the scalable low rank saddle free Newton (LRSFN) method, which avoids forming the Hessian in favor of making a low rank approximation. Additionally, LRSFN can facilitate fast escape from indefinite regions leading to better optimization solutions. In the SA setting, iterative updates are dominated by stochastic noise, and stability of the method is key. We introduce a continuous time stability analysis framework, and use it to demonstrate that stochastic errors for Newton methods can be greatly amplified by ill-conditioned Hessians. The LRSFN method mitigates this stability issue via Levenberg-Marquardt damping. However, generally the analysis shows that second order methods with stochastic Hessian and gradient information may need to take small steps, unlike in deterministic problems. Numerical results show that LRSFN can escape indefinite regions that other methods have issues with; and even under restrictive step length conditions, LRSFN can outperform popular first order methods on large scale deep learning tasks in terms of generalizability for equivalent computational work.
\end{abstract}

\begin{keywords}
Stochastic optimization, nonconvex optimization, Newton methods, stability analysis, low rank methods, deep learning, transfer learning.
\end{keywords}

\begin{AMS}
49M15,  
49M37,  
65C60,  
65K10, 
90C26  
\end{AMS}

\section{Introduction}
We are concerned with high dimensional stochastic nonconvex optimization problems of the form
\begin{align} 
    \min_{w\in \mathbb{R}^d} F(w) = \int \ell(w; x,y) d\nu(x,y) \label{exp_risk_min}.
\end{align}
The vector $w \in \mathbb{R}^d$ represents the optimization parameters, $\ell$ is a smooth loss function, and $(x,y) \in \mathbb{R}^{d_X}\times \mathbb{R}^{d_Y}$ are data pairs distributed with joint probability distribution $\nu(x,y)$. In practice $\nu$ is not known, but one has a sample dataset $\sampleSet = \{(x_i,y_i) \sim \nu\}_{i=1}^{\numSamples}$. Monte Carlo approximation of \eqref{exp_risk_min} yields:
\begin{equation} \label{emp_risk_min}
  \min_{w \in \mathbb{R}^d}F_X(w)= \frac{1}{\numSamples}\sum_{(x_i,y_i) \in \sampleSet} \ell(w;x_i,y_i).
\end{equation}
Ideally, searching for minima of \eqref{emp_risk_min} yields weights $w$ for which \eqref{exp_risk_min} is small---weights $w$ that \emph{generalize} to unseen data. Stochastic optimization problems are typically solved one of two ways: using all of the data $X$, which is referred to as \emph{sample average approximation} (SAA), and subsampling smaller data batches $X_k \subset X$, which is referred to as \emph{stochastic approximation} (SA) \cite{KimPasupathyHenderson2015}. The SAA approach treats the optimization problem as if it is deterministic.

Many tasks in modern computing involve the solution of \eqref{emp_risk_min}; in particular, challenges associated with deep learning have made SA solutions of \eqref{emp_risk_min} of notable interest in recent years. These problems are typically solved iteratively; starting from an initial guess, an update of the form:
\begin{equation} \label{iterative_update}
    w_{k+1} = w_k + \alpha_k p_k
\end{equation}
is applied until a convergence criterion is met. The vector $p_k$ is a search direction involving derivatives of the objective function, and $\alpha_k$ is the step length taken at that iteration. Canonical examples are gradient descent (GD), $p_k = -\nabla F(w_k)$, and full Newton $p_k = - [\nabla^2 F(w_k)]^{-1}\nabla F(w_k)$.

The solution of \eqref{emp_risk_min} via \eqref{iterative_update} has significant challenges due to the nonconvexity of $F$, the stochasticity of $\nu$, the need for hyperparameter tuning involved in the computation of $p_k$ and $\alpha_k$, and the computational costs involved in computing $p_k$ (due to the weight dimension, the dimensionality and cardinality of data $x,y$, as well as other computational costs). Modern applications rely on parallelizability / computational concurrency (e.g. GPU computing) for the efficient solution of these problems. With computational cost in mind, we investigate the extent to which high dimensional derivative information can be compressed to aid the solution of these stochastic optimization problems in the SA regime. In this work we argue for the use of the low rank saddle free Newton (LRSFN) algorithm \cite{OLearyRoseberry2020,OLearyRoseberryAlgerGhattas2019} in solving \eqref{emp_risk_min}. Our argument for LRSFN is based on its ability to facilitate fast escape from indefinite regions of parameter space and thus lead to better solutions and generalizability, as well as its ability to map well onto modern parallel computing architecutres. Since globalization procedures (e.g. line search) are not viable in the SA setting, this leads us to analyze the stability of the algorithm under stochastic perturbations to guide the choice of the step length parameter $\alpha_k$; we compare the stability of LRSFN to other canonical methods. We demonstrate the performance of the method relative to other standard methods on numerical examples ranging from nonconvex analytic functions to CIFAR\{10,100\} classification transfer learning using ResNet50. Our results indicate that LRSFN is well suited to handle indefiniteness in nonconvex problems, and lead to high quality solutions of SA optimization problems, so long as stability concerns are addressed via choice of appropriate step length.

\subsection{Background}

Second order methods are the methods of choice for deterministic problems and many SAA stochastic problems due to their superior convergence properties to first order methods. In the SA setting, where Monte Carlo subsampling errors dominate, classical quadratic convergence rates \cite{Boyd2004,NocedalWright2006} are unattainable due to constant error from gradient Monte Carlo terms. Superlinear convergence is possible for some problems with adaptive sampling strategies \cite{BollapragadaByrdNocedal2018}; this comes at significantly increased per-iteration costs when $d$ and $|X|$ are large. Stochastic gradient evaluation costs $O(N_{X_k}d)$, while formation and factorization of stochastic Hessian matrices formally requires $O(N_{X_k}d^2 + d^3)$ work, where $N_{X_k}$ is the number of data used in the Monte Carlo approximation. This ostensible cost disparity has led to the wide adoption of first order SA methods, where per-iteration costs are much lower than second order methods that form and factorize the full Hessian. These methods can be efficiently implemented in parallel computing environments, allowing for accelerated computation. 

Matrix free subsampled Newton methods \cite{BollapragadaByrdNocedal2018,ChenWuChenEtAl2019,OLearyRoseberryAlgerGhattas2019,RoostaMahoney2016a,RoostaMahoney2016b,YaoGholamiShenEtAl2020} require only \newline $O(N_{S_k}rd)$ work to form the Hessian. Here $r$ is the number of times the Hessian-vector product is formed, and $N_{S_k} \leq N_{X_k}$ is the size of the data subsampled for the Hessian. If $rN_{S_k}/N_{X_k} = O(1)$ the per-iteration costs of approximating Hessians, matrix-free, is comparable to that of first order methods. As is explained in Section \ref{section:compression}, additional work is required to invert the approximated Hessian, but formation of the Hessian-vector products is the dominant cost.

Nonconvexity is a difficult challenge for optimization methods. First order methods can escape saddle points asymptotically \cite{GeHuangJinEtAl2015,JinChiGeEtAl2017,LeePangeas2017,LeeSimchowitzJordan2016}, but indefiniteness can slow first order methods considerably. Indefiniteness is particularly problematic for Newton's method without modifications: applying an indefinite operator to a negative gradient does not guarantee a descent direction, and saddle points can be attractors for full Newton iterates. One can modify Newton's method to escape from saddle points quickly by replacing the Hessian, $H = \nabla^2 F_{S_k}$, with the absolute value of the Hessian, $|H|$. The matrix $|H|$ is a modification of $H$, where the negative eigenvalues are flipped to be positive \cite{GillMurrayWright1981}. That is,
\begin{equation*}
|H| := \sum_{j=1}^d |\lambda_j| u_j u_j^T,
\end{equation*}
where $\lambda_j$ are the eigenvalues of $H$ and $u_j$ are the corresponding eigenvectors. The eigenvalues are sorted such that $|\lambda_i|
\geq |\lambda_j|$ for all $i >j$. Flipping the sign of negative eigenvalues causes saddle points to repel iterates in directions of negative curvature, rather than attract them. Replacing $H$ with $|H|$ leads to the full Saddle Free Newton (full SFN) system
\begin{equation}
\label{sfn_system}
    |H| p = - g.
\end{equation}
Solving \eqref{sfn_system} for the saddle free Newton iterate, $p$, is infeasible in practice for several reasons. First, as discussed above, the formation and factorization of the full Hessian increases per-iteration costs much more than could be amortized in convergence benefits for high dimensional problems. Further, in applications like deep learning, the Hessian is itself often rank degenerate \cite{AlainRouxManzagol2019,GhorbanKrishnanXiaoi2019,OLearyRoseberry2020,OLearyRoseberryGhattas2020,SagunBottouLeCun2016}, so it cannot be directly inverted. In very high dimensions, the rank degeneracy of the Hessian actually becomes a benefit to be exploited, as we shall see. A low rank system can be inverted approximately using Levenberg-Marquardt damping,
\begin{equation}
\label{sfn_system_with_damping}
    (|H| + \gamma I) p = - g,
\end{equation}
where typically $1 \gg \gamma >0$ is the damping parameter. As was noted previously, the dominant information in the Hessian can be approximated efficiently via matrix-free methods for reasonable per-iteration costs ($O(N_{S_k} rd)$ work). The question becomes: which matrix-free method should be used to approximate the solution of \eqref{sfn_system_with_damping}?  

In the work of Dauphin et al.\ \cite{DauphinPescanuGulcehre2014} the Hessian is approximated in the subspace spanned by the $r$ Lanczos vectors of the Hessian for the Newton system. An eigenvalue decomposition of the Hessian in this $r$ dimensional subspace is then used to approximate $|H|$. This approximates the dominant Hessian subspace via an orthogonalization of a power iteration on one vector. The Lanczos method is inherently serial, since each matrix-vector product depends on the prior one, which makes this implementation unsuitable for modern parallel computing. Additionally, the Lanczos method has known numerical stability issues (e.g. sensitivity to starting vector, repeated eigenvalues, other issues) \cite{Cahill2000,Scott1979}. We consider instead directly approximating the low rank eigenvalue decomposition of $H$,
\begin{equation}
     H_r = \sum_{j=1}^r \lambda_j u_j u_j^T,
 \end{equation} 
via scalable parallelizable randomized algorithms \cite{HalkoMartinssonTropp2011,MartinssonTropp2020}. These methods can leverage computational concurrency via blocking (many matrix-vector products simultaneously), and have rigorous guarantees for the quality of the approximations of dominant eigenpairs for the matrix. Approximate solution of \eqref{sfn_system_with_damping} using a low rank approximation of $|H|$ leads to the LRSFN algorithm. As we will show, this method works well not just for low rank nonconvex problems, but also for the choice of $r$ that may be less than the numerical rank of the Hessian, as long as there is consistent decay in the dominant modes of the spectrum. What is left to figure out then, is how to choose an appropriate step length for the method in highly stochastic settings.

In deterministic or SAA stochastic optimization problems, globalization methods such as line search and trust region can be used at each iteration to select the step length $\alpha_k$ to enforce the \emph{sufficient descent} condition \cite{NocedalWright2006}. In SA optimization problems, statistical estimators of sufficient descent are unreliable due to Monte Carlo error and will likely lead to overfitting. This has led to the abandonment of globalization procedures in high dimensional and highly stochastic problems. The main issue then, is how does one choose $\alpha_k$? In the absence of statistical error, stability of optimizers depends on the optimization geometry.

For gradient based methods, bounds of the form $\alpha_k \leq \frac{C}{L}$ are common. Here $L$ is a gradient Lipschitz constant: $\|\nabla F(x) - \nabla F(y)\| \leq L\|x - y\|$ for all $x,y$ \cite{BottouCurtisNocedal2018,Boyd2004}. Full Newton's method is known to be asymptotically stable for $\alpha_k = 1$ \cite{Kelley1999}. When statistical error is introduced into the search direction, additional stability conditions can be derived in expectation with respect to the Monte Carlo error. When this error is large, one expects to take smaller steps; and when the error is small, one can take larger steps. We attempt to formalize bounds for LRSFN and understand how instabilities can propagate through the update procedures that take place in stochastic Newton methods, to help understand how to control them.

\subsection{Contributions}

In this work we argue for the LRSFN algorithm in the SA regime. Due to its ability to facilitate fast escape from indefinite regions, we expect it to converge to higher quality solutions---in particular better generalizability for stochastic problems. Since globalization procedures are not appropriate in this regime, the step length $\alpha_k$ must be chosen by other criteria. In order to motivate this choice, we derive a linearized stability theory for LRSFN and other optimizers (stochastic gradient descent (SGD) and stochastic Newton). In this theory we decompose stability issues into optimization geometry effects, and stochasticity effects. 

This analysis shows that while for first order methods, effects due to optimization geometry and stochasticity can be treated separately, for second order methods, stochastic errors can be greatly amplified due to effects of optimization geometry. Gradient errors can be amplified by Hessian ill-conditioning, and Hessian errors can be amplified by large gradients. So unlike in SAA or deterministic problems, where second order methods are able to take larger steps (i.e. $\alpha = 1$ for full Newton), or employ reliable globalization techniques, in the highly stochastic setting second order methods may require restrictive steps due to the potential for worst-case sensitivities to stochastic errors in highly oscillatory modes of the Hessian eigenbasis. We note that this inherent instability in full stochastic Newton is mitigated in LRSFN by the damping parameter, $\gamma$. 

In numerical results we illustrate the performance of LRSFN relative to other optimizers on a variety of deterministic and stochastic problems. In order to elucidate the stability theory, all experiments are carried out for fixed step lengths. These results illustrate how spectral structure, indefiniteness and stochasticity can affect the stability and convergence of these methods. They also demonstrate that compressing dominant Hessian information can have benefits that outweigh the resulting marginally increased per-iteration costs---even when restrictive step lengths are required. In particular LRSFN converges to significantly higher quality solutions than first order methods for two high dimensional classification transfer learning problems we investigated (CIFAR\{10,100\}). 

The rest of the paper is organized as follows: in Section \ref{section:compression} we argue for the use of randomized eigenvalue decompositions as an ideal matrix-free Hessian computational kernel for modern highly vectorized computing environments, in contrast to inherently serial matrix-free Hessian methods such as Krylov methods. In Section \ref{section:stability} we analyze the stability of stochastic optimizers to help guide the choice of the step length parameter $\alpha_k$ in LRSFN and other methods. In Section \ref{section:numerical_results} we evaluate LRSFN and other optimizers empirically on several test problems. 

The analysis in this paper is focused on stability of LRSFN; some analysis of the convergence properties of LRSFN can be found in \cite{OLearyRoseberryAlgerGhattas2019}.

\subsection{Notation} \label{section:notation}

By $\|\cdot\|$ we mean the $\ell^2$ norm (for matrices and vectors). When other norms such as Frobenius are mentioned, $\|\cdot\|_F$ will be used. By the set $X$, we mean the corpus of all training data, and use $X_k \subset X$ to denote the subsampled data used for the stochastic gradient at iteration $k$; likewise $S_k\subset X$ is the subsampled data used for the stochastic Hessian at iteration $k$. Set cardinalities are denoted $N_X,N_{X_k},N_{S_k}$; for deterministic problems involving a finite sum objective function (SAA problems), $N_{X_k} = N_{S_k} = N_X$. We denote by $\nabla F_{X_k}$ a subsampled gradient using batch $X_k$ at iteration $k$, and $\nabla^2F_{S_k}$ to be the subsampled Hessian using batch $S_k$ at iteration $k$. Expectations are subscripted to denote what measure they are with respect to. We denote by $\mathbb{E}_k$ the conditional expectation taken with respect to batches $X_k,S_k$ at iteration $k$. We use $\alpha_k$ and $\Delta t$ interchangeably for the step length parameter. We denote by $\mathds{1}^T\in\mathbb{R}^{d}$ the vector of all ones.

\section{Scalable compression of high dimensional Hessian information}\label{section:compression}

In this section we discuss the computational and numerical advantages of compressing Hessian information via randomized eigenvalue decompositions. As was noted in the previous section, the eigenvalue decomposition of the Hessian
\begin{equation}
    H = \sum_{j=1}^d \lambda_j u_ju_j^T = U\Lambda U^T,
\end{equation}
formally requires $O(N_{S_k}d^2)$ work to form and $O(d^3)$ work to factorize. For this reason Newton methods that require the entire Hessian are infeasible to use in high dimensional optimization problems. The Hessian can instead be approximated via its action on vectors $v \mapsto Hv$. For a fixed vector $v \in \mathbb{R}^{d}$, if the objective function $F$ is twice differentiable, we have
\begin{equation}
    Hv = \nabla^2F v = \nabla (\nabla F^Tv) = \nabla (g^Tv),
\end{equation}
that is, the Hessian-vector product on a vector $v$ is available by differentiating the inner-product of the gradient with $v$. This makes Hessian-vector products widely available in any setting where gradients are available via automatic differentiation (provided $F$ has sufficient regularity). This is known as the Pearlmutter trick \cite{Pearlmutter1994} in deep learning, where gradients are available via back-propagation (Lagrange multipliers). Many inexact Newton methods make use of Hessian-vector products to approximately invert the Hessian, such as in the Lanczos based SFN of Dauphin et al.\ \cite{DauphinPescanuGulcehre2014}, or other Krylov based inexact Newton methods such as inexact Newton CG \cite{BollapragadaByrdNocedal2018,OLearyRoseberryAlgerGhattas2019}. In all of these methods the approximate Newton inversion is implemented via an inherently sequential process of Hessian-vector products (each Hessian-vector products depends on the result of the previous Hessian-vector products).

In modern computing systems, latency is the dominant computational cost, which creates an advantage for computational concurrency, i.e. as much compute as possible in between reading data and communicating between threads. One way to do this is to employ the randomized eigenvalue decompositions proposed in \cite{HalkoMartinssonTropp2011,MartinssonTropp2020}. Instead of computing inherently serial Hessian-vector products, these methods can leverage de-serialized Hessian matrix products which are more appropriate for modern parallel computing. Given a tall and skinny matrix $V \in \mathbb{R}^{d\times r}$, one can compute a Hessian-matrix product in the same way the Hessian-vector product is approximated:
\begin{equation}
    HV = \nabla^2F V = \nabla (\nabla F^TV) = \nabla (g^TV).
\end{equation}
Since the action on each column of $V$ is independent, these methods can take advantage of vectorization by column partitioning $V$ into skinner sub-matrices. The Hessian-matrix products can be computed concurrently across many processes. Randomized methods, such as the double-pass algorithm sample the action of the Hessian on a matrix sampled from a random distribution $\rho$ in order to approximate the dominant range space of the matrix, i.e. they compute an orthonormal matrix $Q\in \mathbb{R}^{d \times r}$ such that 
\begin{equation}
    \mathbb{E}_\rho[\| H - HQQ^T\| ] < \epsilon,
\end{equation}
for some tolerance $\epsilon > 0$. Due to results in concentration of measure, randomized methods are robust in approximating the range of the Hessian \cite{HalkoMartinssonTropp2011}; this is a key feature of randomized methods since Krylov based subspace methods can have numerical stability issues \cite{Cahill2000,Scott1979}. This first step requires $O(N_{S_k}dr +dr^2)$ work (the additional term comes from QR factorization). Given the range approximation of the Hessian (the matrix $Q$), then one can form the Hessian in the column space of $Q$, i.e. the \emph{small} matrix $T = Q^THQ \in \mathbb{R}^{r \times r}$. Forming the small matrix $T$ takes again $O(N_{S_k}dr)$ work. The eigenvalue decomposition of $T = V_r\Lambda_rV_r^T$ then only requires $O(r^3)$ work which is negligible when $r \ll d$. A rank $r$ approximation of the eigenvalue decomposition of $H$ is then given by
\begin{equation}
    H_r = (QV_r)\Lambda_r(QV_r)^T = U_r \Lambda_r U_r^T.
\end{equation}
The accuracy of this approximate randomized low rank factorization is bounded in $\rho$-expectation by the trailing eigenvalues of the Hessian:
\begin{align}
    \mathbb{E}_\rho[\|H - U_r\Lambda_rU_r^T\|_{\ell^2(\mathbb{R}^{d\times d})}] &\leq C_1 |\lambda_{r+1}| \\ 
    \mathbb{E}_\rho[\|H - U_r\Lambda_rU_r^T\|_{F(\mathbb{R}^{d\times d})}] &\leq C_2 \sum_{j=r+1}^d|\lambda_j|,
\end{align}
where the constants $C_1,C_2$ do not depend on the Hessian but instead on hyperparameters for the randomized methods, and the distribution $\rho$ \cite{HalkoMartinssonTropp2011,MartinssonTropp2020}.

A major benefit of the randomized eigenvalue decomposition framework is that the rank of the matrix can be detected economically via an iterative procedure. In what is known as adaptive range finding / randomized QB factorization, one can continue to increase the dimension of the range until an error approximation is below a given tolerance. We note that adaptive range finding is not ideal for modern deep learning computational frameworks, where all instructions are typically compiled prior to execution. This is because it involves norm checking conditions to break a while loop, which can create cache misses if the routine exits sooner than the compiler expects. The adaptive range finding procedure can be used sparingly to detect the rank of the Hessian, and then fix it for future compiler optimized steps, recomputing occasionally, in order to minimize exposure to computational slow downs. We note the advantages of adaptive range finding but use fixed rank in numerical results.

The result of the randomized eigenvalue decomposition of the Hessian is an approximation of the dominant $r$ eigenpairs of the Hessian, with rigorous error bounds for the approximation in expectation. The cost of computing the randomized low rank approximation of the Hessian is $O(N_{S_k}dr +dr^2 + r^3)$, which is $O(N_{S_k}dr)$ when $r$ is small. If less data are used for the Hessian than the gradient (i.e. $N_{S_k} < N_{X_k}$) and $r$ is small, then the per-iteration cost of the approximation is commensurate to the cost of forming the gradient. The complexity of the method can be made only marginally more than the cost of forming the gradient by choosing $N_{S_K},N_{X_k}$ and $r$ accordingly.

\subsection{The LRSFN Algorithm}
In this section we review the LRSFN algorithm, introduced in \cite{OLearyRoseberryAlgerGhattas2019}. As was noted before, failing to take the absolute value of negative eigenvalues will flip gradient components initially oriented towards descent to ascent direction. In general the search direction $p_k$ would be a mix of ascent and descent components of the gradient, and this could fail to either minimize or maximize the function. Enforcing positive definiteness of any operator applied to the gradient maintains descent \cite{NocedalWright2006}. If $u_j$ is a direction of negative curvature, rescaling $g^Tu_j$ by $|\lambda_j|$ can help facilitate faster escape from indefinite regions. Rescaling in this fashion, one eigenvector at a time in the dominant $r$ dimensional subspace leads to the low rank SFN algorithm (LRSFN).

When the low rank approximation of the Hessian $H \approx U_r\Lambda_rU_r^T$ is computed, one can solve the Levenberg-Marquardt damped LRSFN system using the Sherman-Morrison-Woodbury formula, at the cost of only $O(r)$ matrix-vector products \cite{OLearyRoseberryAlgerGhattas2019}. At iteration $k$, the LRSFN update is as follows:
\begin{align}
\label{lrsfn_direction_formula}
    (U_r|\Lambda_r|U_r^T + \gamma I)p_k = -g_k \\ 
    p_k = - \bigg[\frac{1}{\gamma}I_d - \frac{1}{\gamma^2}U_r \bigg(|\Lambda_r|^{-1} + \frac{1}{\gamma}I_r\bigg)^{-1}U_r^T \bigg]g_k.
\end{align}

An inspection of equation \eqref{lrsfn_direction_formula} shows that the gradient direction is rescaled by the inverse of the damping parameter in the orthogonal complement to the column space of $U_r$. The rescaling by the saddle free Hessian curvature direction is only applied in the column space of $U_r$, and the curvature information is modified slightly by the damping parameter. This gives rise to the interpretation of LRSFN as a gradient descent method rescaled by the damping parameter $\gamma$ in the orthogonal complement of $U_r$, and a saddle free Newton method modified by $\gamma$ in the column space of $U_r$. LRSFN can be seen as a limiting algorithm between gradient descent and Newton. It attempts to capture the computational economy of gradient descent with the convergence benefits of Newton in a way that is computationally frugal. The LRSFN update is applied iteratively with a step length parameter $\alpha_k$ according to the formula $w_{k+1} = w_k + \alpha_k p_k$. The choice of the step length parameter $\alpha_k$ is the focus of the next section.

\section{Stability of LRSFN}\label{section:stability}

In this section we investigate the stability properties of LRSFN and compare it to that of gradient descent (GD) and full Newton, in order to illustrate stability issues that may arise for stochastic second order optimizers, and help guide the choice of $\alpha_k$ for LRSFN. Of particular interest is to what extent there is an interplay between stability issues due to loss landscape geometry and stochastic errors. We derive stability bounds for LRSFN, full Newton and GD that attempt to decouple potential instabilities due to stochastic error, and landscape geometry. For first order methods, these two effects can be easily decoupled, but as we will show there is an interplay between optimization geometry and stochastic errors for stochastic second order methods. 

In what follows we attempt to quantify the sensitivity of the LRSFN algorithm to the choice of rank, batch size, and damping parameter $\gamma$ and understand how these parameters effect the appropriate choice of step length $\alpha_k$. This analysis helps one understand what situations are appropriate to use the LRSFN algorithm, based on properties of the geometry of the loss landscape, and to what extent there is interplay between instabilities due to the Monte Carlo error and the conditioning of the Hessian (the geometry of the stochastic objective function). We begin by conceiving of optimization algorithms as discretizations of a minimizing flow of the form:
\begin{align}
\label{generic_minimizing_flow}
    \frac{dw}{dt} &= p(w)\\
                w(0) &= w_0.
\end{align}


The update $p(w)$ involves derivative information of the expected risk function $F$ evaluated at $w$. This minimizing flow, \eqref{generic_minimizing_flow} is simulated via numerical discretization (explicit Euler time integrator). Additionally, when the optimization problem is stochastic, the discretization involves a Monte Carlo approximation of the expectation with respect to the data distribution $\nu$. In such a flow one is looking for stable stationary points, i.e. $w^*\in \mathbb{R}^{d}$ such that $p(w^*) = 0$, or is suitably small in expectation. The stability of such points is dictated by the spectral properties of the Jacobian of the right hand side $\nabla p(w)$ \cite{Strogatz2018}, specifically the sign of the real components of the eigenvalues. When the real components of the eigenvalues are strictly negative, the critical point is stable. 

For our purposes, $p(w)$ is computed by application of a (hopefully positive-definite) matrix to the negative gradient of $F$. In this case a stationary point of the minimizing flow coincides with a first-order stationary point in optimization. Likewise, the stability of the stationary point can be related to conditions for second-order optimality. The canonical example of such a minimizing flow is gradient flow, $p(w) = -\nabla F(w)$. A stationary point of gradient flow is stable when the eigenvalues of $-\nabla^2 F(w)$ are negative, i.e. the Hessian is positive definite. This agrees exactly with the classical definition of strict second order stationary points in optimization. 

In typical two-step iterative optimization we are looking for stationary points via the explicit Euler update (interchanging $\alpha_k$ and $\Delta t$ for step length)
\begin{equation}
    w_{k+1} = w_k + \Delta t p(w_k);
\end{equation}
this approximation introduces $O(\Delta t)$ discretization error, but more importantly stability issues. explicit Euler (EE) is notoriously unstable, and in numerical differential equations implicit time integrators are preferred (i.e. implicit Euler (IE) $w_{k+1} = w_k + \Delta t p(w_{k+1})$ or Crank-Nicolson which is the average of EE and IE). Implicit time integrators are not useful in optimization since they increase the per-iteration complexity significantly by introducing an additional nonlinear Newton iteration to each step. At each iteration of the time-stepping approximation, error is introduced to the path, causing it to diverge from the minimizing flow (the solution of \eqref{generic_minimizing_flow}, which exists given suitable regularity of $p(w)$). The introduced errors can be due to stochastic noise, discretization errors, or other numerical artifacts. We focus on the effects of Monte Carlo errors for stochastic optimization problems, and call these errors $\eta$. These deviations are unavoidable in practice, but one can attempt to bound the deviations to not stray \emph{too far} from the minimizing path; this will be our guiding maxim.  The introduced errors propagate through the iterative procedure; this propagation is a discretization of the following ODE.
\begin{equation}
    \label{pertubed_minimizing_flow}
    \frac{d}{dt}(w + \eta) = p(w + \eta) + \underbrace{p_\text{MC}(w+\eta) - p(w+\eta)}_{\xi_{\text{MC}}(w + \eta)},
\end{equation}
where $p_\text{MC}$ is the Monte Carlo approximation of the search direction $p$. Taylor expanding the $p$ term about $w$ we have
\begin{equation}
    \label{taylor_of_p}
    p(w+ \eta) = p(w) + \nabla p(w)\eta + O(\eta^2).
\end{equation}
Since $\eta = 0$ at the initial condition, and we are interested in bounding small perturbations of $\eta$, we can neglect the quadratic terms in $\eta$. Subtracting \eqref{generic_minimizing_flow} from \eqref{pertubed_minimizing_flow} and employing the Taylor approximation we then get the linearized ODE system for the deviation:
\begin{align}
    \frac{d \eta}{dt} &= -\frac{dw}{dt} + p(w) + \nabla p(w) \eta + \xi_\text{MC}(w+\eta) \nonumber \\
                    &= \nabla p(w) \eta + \xi_\text{MC}(w+\eta).
\end{align}
In the case that the minimization problem is deterministic we have $\xi_\text{MC} = 0$. In stability analysis we are interested in deriving restrictions on the step length so that we can bound the deviation $\eta$ when the system is discretized via explicit Euler:
\begin{align}
    \label{stability_system}
    w_{k+1} &= w_k + \Delta t p(w_k) \\ 
    \eta_{k+1} &= \eta_k + \Delta t \nabla p(w_k)\eta_k + \Delta t \xi_\text{MC}(w_k + \eta_k).
\end{align}
This system expresses how deviations from the discretized minimizing flow propagate forward through the discretized subsampled minimizing flow. We say that the discretized flow is \emph{linearly stable} if we can maintain $\mathbb{E}_k[\|\eta_k\|] < 1$ for all $k$, where $\mathbb{E}_k$ denotes the conditional expectation taken with respect to the batch sizes chosen for gradient and Hessian Monte Carlo approximations at iteration $k$. 

\begin{theorem}{Generic Bound for Expected Linear Stability of Explicit Stochastic Optimizers}
\label{generic_stability_bound_theorem}

For any $\zeta \in (0,1)$ if the following conditions are met:
\begin{subequations}
\label{zeta_stability_bounds}
\begin{align}
    &\mathbb{E}_k[\|[I+  \Delta t \nabla p(w_k)]\eta_k\|] < \zeta  \label{zeta_stability_bounds1}\\
    & \Delta t < \frac{1 - \zeta}{\mathbb{E}_k[\|\xi_\text{MC}(w_k + \eta_k)\|]}\label{zeta_stability_bounds2},
\end{align}
\end{subequations}
then the iterative update at iteration $k+1$ is linearly stable.
\end{theorem}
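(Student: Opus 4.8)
The plan is to bound $\mathbb{E}_k[\|\eta_{k+1}\|]$ directly from the discrete update equation \eqref{stability_system} and show it is strictly less than $1$ under the two hypotheses. First I would take norms on both sides of $\eta_{k+1} = \eta_k + \Delta t \nabla p(w_k)\eta_k + \Delta t \xi_\text{MC}(w_k + \eta_k)$, grouping the first two terms so that $\eta_{k+1} = [I + \Delta t \nabla p(w_k)]\eta_k + \Delta t\, \xi_\text{MC}(w_k+\eta_k)$. Applying the triangle inequality gives $\|\eta_{k+1}\| \le \|[I + \Delta t \nabla p(w_k)]\eta_k\| + \Delta t\,\|\xi_\text{MC}(w_k+\eta_k)\|$, and then taking $\mathbb{E}_k$ of both sides (using linearity and monotonicity of expectation) yields $\mathbb{E}_k[\|\eta_{k+1}\|] \le \mathbb{E}_k[\|[I + \Delta t \nabla p(w_k)]\eta_k\|] + \Delta t\,\mathbb{E}_k[\|\xi_\text{MC}(w_k+\eta_k)\|]$.

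Next I would substitute the two hypotheses. Condition \eqref{zeta_stability_bounds1} bounds the first term by $\zeta$. For the second term, rearranging \eqref{zeta_stability_bounds2} gives $\Delta t\, \mathbb{E}_k[\|\xi_\text{MC}(w_k + \eta_k)\|] < 1 - \zeta$ (valid since the denominator is nonnegative; one should note the degenerate case where $\mathbb{E}_k[\|\xi_\text{MC}\|] = 0$, in which the bound on $\Delta t$ is vacuous and the second term is simply $0 < 1-\zeta$). Combining the two bounds, $\mathbb{E}_k[\|\eta_{k+1}\|] < \zeta + (1 - \zeta) = 1$, which is precisely the linear stability condition $\mathbb{E}_k[\|\eta_{k+1}\|] < 1$ at iteration $k+1$.

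Honestly, there is no real obstacle here: the result is essentially a one-line triangle-inequality estimate dressed in the ODE-discretization language set up earlier in the section. The only points requiring any care are (i) correctly handling the expectation — since $w_k$ and $\eta_k$ are determined by batches at earlier iterations while $\mathbb{E}_k$ conditions on the iteration-$k$ batches, one should be mindful that $\nabla p(w_k)$ and $\eta_k$ are treated as fixed under $\mathbb{E}_k$ in \eqref{zeta_stability_bounds1} only if that is how the hypothesis is stated (which it is), so the step is purely formal; and (ii) the edge case $\mathbb{E}_k[\|\xi_\text{MC}\|]=0$ noted above, plus observing that $1-\zeta>0$ since $\zeta\in(0,1)$ so the claimed $\Delta t$ range is nonempty. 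I would also remark that this theorem is merely a \emph{generic} decoupling statement: the real content of the section is in subsequently estimating $\mathbb{E}_k[\|[I+\Delta t\nabla p(w_k)]\eta_k\|]$ and $\mathbb{E}_k[\|\xi_\text{MC}\|]$ for the specific choices $p = $ GD, full Newton, and LRSFN, which is where the interplay between Hessian conditioning and stochastic error will appear.
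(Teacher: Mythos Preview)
Your proposal is correct and matches the paper's proof essentially line for line: the paper also groups $\eta_{k+1} = [I+\Delta t\nabla p(w_k)]\eta_k + \Delta t\,\xi_\text{MC}(w_k+\eta_k)$, applies the triangle inequality under $\mathbb{E}_k$, and bounds the two terms by $\zeta$ and $1-\zeta$ respectively. Your additional remarks on the degenerate case $\mathbb{E}_k[\|\xi_\text{MC}\|]=0$ and on the role of the theorem as a decoupling device are accurate but go beyond what the paper records in its (one-line) proof.
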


\begin{proof}
The result follows directly from the triangle inequality:
\begin{align}
    \mathbb{E}_k[\|\eta_{k+1}\|] \leq \mathbb{E}_k[\|\eta_k + \Delta t \nabla p(w_k)\eta_k\|] + \Delta t \mathbb{E}_k[\|\xi_\text{MC}(w_k+\eta_k)\|] < \zeta + 1 - \zeta = 1
\end{align}
\end{proof}
If the optimization problem is deterministic, then we take $\zeta = 1$ and disregard the condition regarding the Monte Carlo error. The result is simple, but powerful for studying how landscape geometry \eqref{zeta_stability_bounds1} and stochasticity \eqref{zeta_stability_bounds2} effect stability of explicit optimization methods. The first term in \eqref{zeta_stability_bounds1}  can be bounded \emph{conservatively} using Cauchy-Schwarz:
\begin{equation}
    \mathbb{E}_k[\|[I+  \Delta t \nabla p(w_k)]\eta_k\|] \leq \|I + \Delta t \nabla p(w_k)\|_{\ell^2(\mathbb{R}^{d\times d})}\mathbb{E}_k[\|\eta_k\|].
\end{equation}
If $\mathbb{E}_k[\|\eta_k\|] <1$, then the following condition typical of linearized ODE stability analysis \cite{Butcher2006} maintains the expected stability:
\begin{equation}
    \label{conservative_stability_zeta}
    \|I + \Delta t \nabla p(w_k)\|_{\ell^2(\mathbb{R}^{d\times d})} < \zeta.
\end{equation} 
Note that if $\nabla p(w_k)$ has any positive eigenvalues then no $\Delta t > 0$ exist that satisfy the conservative bound \eqref{conservative_stability_zeta}. If $\nabla p(w_k)$ is negative semi-definite, then we can satisfy this conservative bound if 
\begin{equation}
    \label{typical_explicit_euler_stability_bound}
    \Delta t < \frac{1 + \zeta}{|\lambda_1(\nabla p(w_k))|}.
\end{equation}
The original perturbation growth condition in \eqref{zeta_stability_bounds1} can still be met if $\nabla p(w_k)$ has positive eigenvalues, but this depends on how large the coefficients of $\eta_k$ are in the corresponding eigenvectors, and this is hard to say much about in general. Coefficients of $\eta_k$ aligned with eigenvectors corresponding to positive eigenvalues will experience local exponential growth. Since the negative directions promote exponential decay, a noise coefficient amplified at one iteration could be later diminished if the Rayleigh-quotient for that direction changes sign between iterations.

In order to derive stability bounds for specific stochastic optimizers, we begin with a standard assumption used to bound the Monte Carlo error for the subsampled gradient (see for example \cite{BollapragadaByrdNocedal2018}).
\begin{enumerate}[label=A\arabic*]
    \item \label{assumption_grad_mc} (Bounded variance of sample gradients) There exists a constant $v$ such that
    \begin{equation}
    tr(\text{Cov}(\nabla F_i(w))) \leq v^2 \quad \forall w \in \mathbb{R}^d
    \end{equation}

\end{enumerate}

\begin{theorem}{Linear Stability of (Stochastic) Gradient Descent}
\label{theorem_stability_grad_descent}
For any $\zeta \in (0,1)$, if the Hessian $\nabla^2 F(w_k)$ is semi-positive definite, and assumption \ref{assumption_grad_mc} holds, then (stochastic) gradient descent
\begin{equation}
    w_{k+1} = w_k - \Delta t \nabla F_{X_k}(w_k),
\end{equation}
is linearly stable, so long as
\begin{equation}
 \Delta t < \min \left\{\frac{1 + \zeta}{|\lambda_1(\nabla^2 F(w_k))|}, \frac{(1 - \zeta)\sqrt{N_{X_k}}}{v} \right\}.
\end{equation}

\end{theorem}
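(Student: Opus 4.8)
The plan is to specialize the generic stability result, Theorem~\ref{generic_stability_bound_theorem}, to the gradient-flow search direction $p(w) = -\nabla F(w)$, and then check its two hypotheses \eqref{zeta_stability_bounds1}--\eqref{zeta_stability_bounds2} under semidefiniteness of the Hessian and Assumption~\ref{assumption_grad_mc}. The first step is bookkeeping: identify $\nabla p(w_k) = -\nabla^2 F(w_k)$ and identify the Monte Carlo perturbation of the search direction as the gradient sampling error, $\xi_\text{MC}(w_k+\eta_k) = \nabla F(w_k+\eta_k) - \nabla F_{X_k}(w_k+\eta_k)$. With these identifications the claimed step-length bound is precisely the intersection of the two restrictions produced by \eqref{zeta_stability_bounds1} and \eqref{zeta_stability_bounds2}.

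For \eqref{zeta_stability_bounds1} I would run the conservative route already laid out in the excerpt: Cauchy--Schwarz gives $\mathbb{E}_k[\|[I+\Delta t\nabla p(w_k)]\eta_k\|] \le \|I+\Delta t\nabla p(w_k)\|\,\mathbb{E}_k[\|\eta_k\|]$, and since we carry the inductive hypothesis $\mathbb{E}_k[\|\eta_k\|] < 1$ (with $\eta_0 = 0$ as base case), it suffices to control $\|I+\Delta t\nabla p(w_k)\|$. Because $\nabla^2 F(w_k)$ is positive semidefinite, the matrix $\nabla p(w_k) = -\nabla^2 F(w_k)$ is negative semidefinite with $|\lambda_1(\nabla p(w_k))| = |\lambda_1(\nabla^2 F(w_k))|$, so \eqref{typical_explicit_euler_stability_bound} applies verbatim and $\Delta t < (1+\zeta)/|\lambda_1(\nabla^2 F(w_k))|$ secures \eqref{conservative_stability_zeta} and hence \eqref{zeta_stability_bounds1}. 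Note the right-hand side here is effectively deterministic under $\mathbb{E}_k$, since $w_k$, $\eta_k$, and the true Hessian $\nabla^2 F(w_k)$ depend only on batches from earlier iterations.

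The substance of the argument is \eqref{zeta_stability_bounds2}, where I must bound $\mathbb{E}_k[\|\xi_\text{MC}(w_k+\eta_k)\|]$. I would first pass to the second moment by Jensen, $\mathbb{E}_k[\|\xi_\text{MC}(w_k+\eta_k)\|] \le \big(\mathbb{E}_k[\|\nabla F(w_k+\eta_k)-\nabla F_{X_k}(w_k+\eta_k)\|^2]\big)^{1/2}$, then observe that, because the subsampled gradient is unbiased, the squared-error expectation equals $tr(\text{Cov}(\nabla F_{X_k}(w_k+\eta_k)))$, and that averaging $N_{X_k}$ i.i.d.\ per-sample gradients scales this covariance by $1/N_{X_k}$. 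Assumption~\ref{assumption_grad_mc} then caps the per-sample trace by $v^2$, yielding $\mathbb{E}_k[\|\xi_\text{MC}(w_k+\eta_k)\|] \le v/\sqrt{N_{X_k}}$; substituting into \eqref{zeta_stability_bounds2} gives $\Delta t < (1-\zeta)\sqrt{N_{X_k}}/v$. Taking the minimum of this and the geometric bound from the previous step makes both hypotheses of Theorem~\ref{generic_stability_bound_theorem} hold, so the update at $k+1$ is linearly stable, which closes the induction.

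The one place I expect to need care is that last variance computation: Assumption~\ref{assumption_grad_mc} is a bound on the covariance of a single sample gradient that holds uniformly in $w$, so it must be invoked at the perturbed iterate $w_k+\eta_k$ rather than at $w_k$, and the $1/N_{X_k}$ reduction is exact for i.i.d.\ (with-replacement) minibatches while sampling without replacement only sharpens the constant via a finite-population factor $\le 1$. Everything else is a direct specialization of Theorem~\ref{generic_stability_bound_theorem} together with the elementary eigenvalue estimate \eqref{typical_explicit_euler_stability_bound}, both already established in the excerpt.
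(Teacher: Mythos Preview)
Your proposal is correct and follows essentially the same approach as the paper: identify $p=-\nabla F$, $\nabla p=-\nabla^2 F$, bound the Monte Carlo term via the Jensen/variance argument (which the paper packages as Lemma~\ref{grad_mc_lemma}), and then invoke Theorem~\ref{generic_stability_bound_theorem} together with \eqref{typical_explicit_euler_stability_bound}. Your write-up is in fact slightly more careful than the paper's, correctly evaluating $\xi_\text{MC}$ at $w_k+\eta_k$ and noting that Assumption~\ref{assumption_grad_mc} must be applied at the perturbed iterate; the paper's proof elides this by writing $\xi_\text{MC}(w_k)$, relying implicitly on the uniformity in $w$ of the assumption.
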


\begin{proof}
In this construction we have $p(w_k) = - \nabla F(w_k)$ and \linebreak $\xi_\text{MC}(w_k) = - \nabla F_{X_k}(w_k) + \nabla F(w_k)$. We can bound
\begin{equation}
   \mathbb{E}_k[ \|\xi_\text{MC}(w)\|] \leq \frac{v}{\sqrt{N_{X_k}}},
\end{equation}
using Lemma \ref{grad_mc_lemma} in Appendix \ref{section:appendix_bounds}. The result follows from \eqref{typical_explicit_euler_stability_bound} and Theorem \ref{generic_stability_bound_theorem}.
\end{proof}
The stable step length must meet two requirements: one stemming from the most extreme local curvature of the expected risk landscape, and another involving the Monte Carlo error. These bounds are similar to existing bounds for GD \cite{Boyd2004}, as well as stochastic versions such as AdaGrad \cite{WardWuBottou2019}. In these bounds the role of the optimization geometry is encapsulated by the Lipschitz constant: $L$ such that $\|\nabla F(x) - \nabla F(y)\| \leq L\|x - y\| \forall x ,y$. Our bound instead uses the spectral norm of the local Hessian, which can be related to $L$ noting that via the mean value theorem and Cauchy-Schwarz; for twice differentiable $F$,
\begin{equation}
    \|\nabla F(x) - \nabla F(y)\| \leq \int_0^1 \|\nabla^2 F((1-t)x + ty)\|dt\|x-y\|.
\end{equation}



In order to proceed with analysis of the stability of stochastic second order optimizers, we must first assume that the stochastic Hessians have bounded variance.
\begin{enumerate}[label=A\arabic*]
    \setcounter{enumi}{1}

    \item \label{assumption_hess_mc} (Bounded variance of Hessian and absolute Hessian components) There exist $\sigma$, $\sigma^{\text{abs}}$ such that, for all component Hessians, we have
    \begin{subequations}
    \begin{align}
    \|\mathbb{E}[(\nabla^2 F_i(w) - \nabla^2F(w))^2]\| &\leq \sigma^2, \quad \forall w \in \mathbb{R}^d,\label{bounded_var_hess_comp} \\
    \|\mathbb{E}[(|\nabla^2 F_i(w)| - |\nabla^2F(w)|)^2]\| &\leq (\sigma^\text{abs})^2, \quad \forall w \in \mathbb{R}^d\label{bounded_var_abshess_comp}.
    \end{align}
    \end{subequations}
\end{enumerate}


This assumption leads to bounds for the Monte Carlo error of the search directions of both stochastic Newton, as well as (stochastic) LRSFN. While the effects of the curvature information and the stochastic update error can be fully decoupled for stochastic gradient descent, the next proposition demonstrates that there is interplay between the Hessian curvature information and the stochastic Monte Carlo errors for stochastic Newton methods.

\begin{proposition}{Monte Carlo Error for (Stochastic) Newton and LRSFN \linebreak search directions}
\label{proposition_newton_lrsfn_search}

Assume \ref{assumption_grad_mc} and \ref{assumption_hess_mc}. Defining $E_\text{MC}^\text{N} = \nabla^2 F_{S_k} - \nabla^2 F$, the Monte Carlo errors for the search directions in stochastic Newton has the following conservative bounds:
\begin{subequations}
\begin{align}
&\mathbb{E}_k[\|\nabla^2F_{S_k}^{-1}\nabla F_{X_k} - \nabla^2 F^{-1}\nabla F\|]\\
 &\leq \frac{C_0}{\sqrt{N_{X_k}}} + \frac{C_1}{\sqrt{N_{S_k}}}\mathbb{E}_k[\|\nabla F_{X_k}\|] +\frac{C_2}{N_{S_k}}\mathbb{E}_k[\|\nabla F_{X_k}\|]\\
&C_0 = v\|\nabla^2F^{-1}\label{newton_mc_c0}\| \\
&C_1 = \frac{\sigma}{2}\|\nabla^2F^{-1}\|^2  (1 + \mathbb{E}_k[\|(I + E_\text{MC}^\text{N}\nabla^2F^{-1})^{-1}]) \label{newton_mc_c1}\\
&C_2 = \frac{\sigma^2}{4}\|\nabla^2F^{-1}\|^3  \mathbb{E}_k\left[\left\|\left(I +\frac{1}{2}E^\text{N}_\text{MC}\nabla^2F^{-1}\right)^{-1}\right\|\right]\label{newton_mc_c2}.
\end{align}
\end{subequations}
Likewise defining $E_{MC}^\text{L} = |\nabla^2F_{S_k}| - |\nabla^2 F|$,for (stochastic) low rank saddle free Newton the bounds are
\begin{subequations}
\begin{align}
&\mathbb{E}_k[\|[|\nabla^2F^{(r)}_{S_k}|+\gamma I]^{-1}\nabla F^{(r)}_{X_k} - [|\nabla^2 F|+\gamma I]^{-1}\nabla F\|] \leq \nonumber \\ 
&\qquad\frac{C_0}{\sqrt{N_{X_k}}} + \frac{C_1}{\sqrt{N_{S_k}}}\mathbb{E}_k[\|\nabla F_{X_k}\|] +\frac{C_2}{N_{S_k}}\mathbb{E}_k[\|\nabla F_{X_k}\|]\\
&C_0 = \frac{v}{\gamma}\label{lrsfn_mc_c0}\\
&C_1 = \frac{\sigma^\text{abs}}{2\gamma^2}  (1 + \mathbb{E}_k[\|(I + E_\text{MC}^\text{N}[|\nabla^2F^{(r)}|+\gamma I]^{-1})^{-1}])\label{lrsfn_mc_c1} \\
&C_2 = \frac{(\sigma^\text{abs})^2}{4\gamma^3}  \mathbb{E}_k\left[\left\|\left(I +\frac{1}{2}E^\text{N}_\text{MC}[|\nabla^2F^{(r)}|+\gamma I]^{-1}\right)^{-1}\right\|\right]\label{lrsfn_mc_c2}.
\end{align}
\end{subequations}
\end{proposition}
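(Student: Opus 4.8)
The plan is to handle stochastic Newton and LRSFN with one argument, writing each search direction as $\widetilde A^{-1}\widetilde g$: take $\widetilde A = \nabla^2 F_{S_k}$, $A = \nabla^2 F$ in the Newton case and $\widetilde A = |\nabla^2 F^{(r)}_{S_k}| + \gamma I$, $A = |\nabla^2 F| + \gamma I$ in the LRSFN case, with $\widetilde g = \nabla F_{X_k}$, $g = \nabla F$, and $E := \widetilde A - A$ (i.e.\ $E_\text{MC}^\text{N}$, resp.\ $E_\text{MC}^\text{L}$ up to the randomized low-rank truncation). First split with one add-and-subtract,
\[
\widetilde A^{-1}\widetilde g - A^{-1}g = A^{-1}(\widetilde g - g) + (\widetilde A^{-1} - A^{-1})\widetilde g,
\]
so that $\mathbb E_k[\cdot]$ and the triangle inequality reduce the claim to a \emph{gradient part} and a \emph{Hessian part}. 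The gradient part is immediate: Cauchy--Schwarz together with Assumption~\ref{assumption_grad_mc} through Lemma~\ref{grad_mc_lemma} gives $\mathbb E_k[\|A^{-1}(\widetilde g - g)\|] \le \|A^{-1}\|\,v/\sqrt{N_{X_k}}$, which is the $C_0/\sqrt{N_{X_k}}$ term; in the LRSFN case $|\nabla^2 F| \succeq 0$ forces $\|A^{-1}\| = \|(|\nabla^2 F| + \gamma I)^{-1}\| \le 1/\gamma$, collapsing $C_0$ to $v/\gamma$.

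For the Hessian part I would iterate the resolvent identity $\widetilde A^{-1} - A^{-1} = -A^{-1}E\widetilde A^{-1}$ once to get
\[
(\widetilde A^{-1} - A^{-1})\widetilde g = -A^{-1}EA^{-1}\widetilde g + A^{-1}EA^{-1}E\widetilde A^{-1}\widetilde g
\]
(doing this with the integral form of the remainder produces the precise numerical constants $\tfrac12,\tfrac14$ and the shifted resolvent $(I + \tfrac12 EA^{-1})^{-1}$; I suppress that bookkeeping here). For the first term the decisive observation is that $A^{-1}\widetilde g$ depends only on the gradient batch $X_k$, which is drawn independently of the Hessian batch $S_k$; conditioning on $X_k$ and using $\mathbb E[\|Ev\|^2] = v^T\mathbb E[E^2]v \le \|\mathbb E_k[E^2]\|\,\|v\|^2$ with the variance-of-the-mean estimate $\|\mathbb E_k[E^2]\| \le \sigma^2/N_{S_k}$ --- obtained by expanding $E = \tfrac{1}{N_{S_k}}\sum_{i\in S_k}(\nabla^2 F_i - \nabla^2 F)$ so that independent samples kill the cross terms, then applying \eqref{bounded_var_hess_comp} --- gives $\mathbb E_k[\|A^{-1}EA^{-1}\widetilde g\|] \le \|A^{-1}\|^2\,\tfrac{\sigma}{\sqrt{N_{S_k}}}\,\mathbb E_k[\|\widetilde g\|]$, the $C_1/\sqrt{N_{S_k}}$ term. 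For the second term, submultiplicativity with $\|\widetilde A^{-1}\| \le \|A^{-1}\|\,\|(I + EA^{-1})^{-1}\|$ yields a bound $\le \|A^{-1}\|^3\,\|E\|^2\,\|(I + EA^{-1})^{-1}\|\,\|\widetilde g\|$; taking $\mathbb E_k$ (again using $X_k \perp S_k$) with $\mathbb E_k[\|E\|^2] \le \sigma^2/N_{S_k}$ gives the $C_2/N_{S_k}$ term. The LRSFN bounds then follow line for line, replacing every $\|A^{-1}\|$ by its uniform bound $1/\gamma$ and invoking \eqref{bounded_var_abshess_comp} ($\sigma^\text{abs}$) in place of \eqref{bounded_var_hess_comp}; I would package the variance-of-the-mean estimate for the Hessian as an appendix lemma parallel to Lemma~\ref{grad_mc_lemma}.

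The main obstacle is the Hessian part: converting the \emph{componentwise} operator-norm variance bound in Assumption~\ref{assumption_hess_mc} into usable estimates on the subsampled mean $E$. For the first-order term the clean route --- acting $E$ on the fixed (conditionally on $X_k$) vector $A^{-1}\widetilde g$ and using $\|Ev\|^2 = v^T E^2 v$ --- sidesteps a dimensional ($\log d$) factor, and this is exactly the step that forces the $\mathbb E_k[\|\nabla F_{X_k}\|]$ multiplier, making visible the advertised coupling of Hessian stochastic error with gradient magnitude; for the second-order term one must instead control $\mathbb E_k[\|E\|^2]$ together with the random resolvent factor, and pinning down the numerical constants and the $\tfrac12$-shift in its argument requires careful handling of the integral remainder. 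A secondary subtlety specific to LRSFN is that the true perturbation is $|\nabla^2 F^{(r)}_{S_k}| - |\nabla^2 F|$, so one must either neglect or separately bound the randomized low-rank truncation and reserve \eqref{bounded_var_abshess_comp} for the Monte Carlo component; it is the damping $\gamma I$ --- hence $\|A^{-1}\| \le 1/\gamma$ and uniformly bounded resolvents --- that keeps every constant finite, with no counterpart for undamped stochastic Newton, which is the qualitative point the proposition is making.
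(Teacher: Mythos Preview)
Your overall architecture matches the paper exactly: the same add--and--subtract split $\widetilde A^{-1}\widetilde g - A^{-1}g = A^{-1}(\widetilde g - g) + (\widetilde A^{-1}-A^{-1})\widetilde g$ is precisely the content of the paper's Lemma~\ref{stochastic_newton_mc_lemma}, and your treatment of the gradient term (including the collapse $\|(|\nabla^2 F|+\gamma I)^{-1}\|\le 1/\gamma$ for LRSFN) is identical. The substantive difference is in how you bound $\widetilde A^{-1}-A^{-1}$. You iterate the resolvent identity $\widetilde A^{-1}-A^{-1}=-A^{-1}E\widetilde A^{-1}$ once to peel off a first--order term $-A^{-1}EA^{-1}$ and a quadratic remainder $A^{-1}EA^{-1}E\widetilde A^{-1}$; the paper instead writes
\[
\widetilde A^{-1}-A^{-1}=(\widetilde A+A)^{-1}(\widetilde A+A)(\widetilde A^{-1}-A^{-1})=(2A+E)^{-1}\bigl(A\widetilde A^{-1}-\widetilde A A^{-1}\bigr)
\]
and applies the Woodbury formula to $(2A+E)^{-1}$. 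This symmetric factorization is exactly what manufactures the $\tfrac12$, the $\tfrac14$, and --- crucially --- the $\tfrac12$--shifted resolvent $(I+\tfrac12 EA^{-1})^{-1}$: the $2A$ in the denominator becomes $\tfrac12 A^{-1}$ and pushes a $\tfrac12$ onto $E$ in the Woodbury correction. Your iterated--resolvent route is perfectly valid and arguably cleaner, but it will produce constants like $\sigma\|A^{-1}\|^2$ and $\sigma^2\|A^{-1}\|^3\mathbb E_k[\|(I+EA^{-1})^{-1}\|]$ with no halving and no shift; the parenthetical claim that an ``integral form of the remainder'' recovers the paper's precise numerical constants is not right --- those constants are artifacts of the $(A_{S_k}+A)^{-1}$ trick, not of any Taylor remainder.

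Two further remarks. First, your vector--variance step $\mathbb E[\|Ev\|^2]=v^T\mathbb E[E^2]v\le\|\mathbb E_k[E^2]\|\|v\|^2$, conditioning on $X_k$, is actually more careful than what the paper does: the paper simply invokes $\mathbb E_k[\|E\|]\le\sigma/\sqrt{N_{S_k}}$ (its Lemma~\ref{hessian_mc_lemma}) and splits expectations of products by Cauchy--Schwarz without further comment, so your route genuinely avoids a dimensional constant the paper glosses over. Second, your caveat about the LRSFN perturbation mixing Monte Carlo error with randomized low--rank truncation error is well placed; the paper's proof treats both cases through the single abstract lemma and does not separate these either.
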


\begin{proof}
This is a corollary of Lemma \ref{stochastic_newton_mc_lemma} in Appendix \ref{section:appendix_bounds}.
\end{proof}

These bounds are conservative, and in the case of full Newton, if the expected risk Hessian is rank degenerate, these conservative bounds will not exist (i.e. the worst case is uncontrollable instability). The key takeaway from these bounds is that gradient Monte Carlo errors can be greatly amplified by an ill-conditioned Hessian matrix, likewise, Hessian Monte Carlo errors can be greatly amplified by a large gradient. The constants $C_0$ represent how Monte Carlo errors in the gradient can be potentially amplified by inverting in directions corresponding to the smallest eigenvalues of the operator being applied to the gradient. For stochastic Newton, the bound \eqref{newton_mc_c0} suggests that the smallest eigenvalues of the Hessian can greatly amplify errors aligned with the corresponding eigenvectors, posing a significant stability issue. Equation \eqref{lrsfn_mc_c0} suggests that this is not nearly as bad of a problem for LRSFN, since the damping parameter $\gamma$ controls the amplifications, and one would not want to take $\gamma$ too small, since the Sherman-Morrison-Woodbury formula can itself run into stability issues. 

The constants $C_1$ and $C_2$ represent terms associated with the Hessian Monte Carlo error. For either of these constants to be less than infinity the expectations of the norms of matrices of the form $\mathbb{E}_k[\|(I+E_\text{MC}A^{-1})^{-1}\|],\mathbb{E}_k[\|(I+\frac{1}{2}E_\text{MC}A^{-1})^{-1}\|]$ must be bounded. Here $A$ is a generic representation of the operator inverted in the Newton update, and $E_\text{MC}$ represents the matrix Monte Carlo error. An obvious issue occurs when the spectra of the operators $E_\text{MC}A^{-1},\frac{1}{2}E_\text{MC}A^{-1}$ contain $-1$ exactly in expectation: a rare event. We can reasonably assume that this event will not occur asymptotically, since the spectrum of the operator can be bounded in expectation away from $-1$ using Cauchy-Schwarz:
\begin{equation}
    \mathbb{E}_k[\|E_\text{MC}A^{-1}\|] \leq \frac{\sigma_A}{\sqrt{N_{S_k}}}\|A^{-1}\| < 1,
\end{equation}
where $\sigma_A$ is the assumed bound on the variance of the stochastic matrix $A_{S_k}$. This condition is met in expectation when $N_{S_k} > \sigma_A^2\|A^{-1}\|^2$. The terms involving constants $C_1$ and $C_2$ both involve the norm of the gradient, so as iterates gets closer to a local minimizer, the effects of Hessian Monte Carlo error are diminished proportionally to the decay of the gradient norm in expectation. As with $C_0$, the constants $C_1,C_2$ involve the spectral norm of the operator being inverted in the update.

In addition to the stochastic Hessian error, there is the term involving the geometry of the optimization landscape / the sensitivity of the update direction with changes in the parameter, $\nabla p(w)$. For Newton, $p(w) = -[\nabla^2 F(w)]^{-1}\nabla F(w)$, and we can derive $\nabla p(w)$ via the product rule:
\begin{equation}
    \nabla p(w) = - \left(\nabla[\nabla^2 F(w)]^{-1} \right)\nabla F(w) - [\nabla^2F(w)]^{-1}\nabla^2 F(w).
\end{equation}
Assuming that the Hessian is invertible, we can employ the inverse matrix derivative identity:
\begin{equation}
    \nabla(A^{-1}) = - A^{-1}(\nabla A) A^{-1},
\end{equation}
and the formula is then
\begin{equation}
    \nabla p(w) = [\nabla^2 F(w)]^{-1} \nabla^3 F(w) [\nabla^2 F(w)]^{-1} \nabla F(w) - I.
\end{equation}
For low rank saddle free Newton, $p(w) = -(|\nabla^2 F^{(r)}| + \gamma I)^{-1}\nabla F$, and the search direction Jacobian is
\begin{equation}
    \nabla p(w) = (|\nabla^2 F^{(r)}| + \gamma I)^{-1} (\nabla |\nabla^2 F^{(r)}|)(|\nabla^2 F^{(r)}| + \gamma I)^{-1}\nabla F - I.
\end{equation}
In both the cases of Newton and LRSFN the update meets the stability condition stemming from optimization geometry if \eqref{zeta_stability_bounds1} is met for the given choice of $\zeta$. This condition is hard to verify, because verification requires knowledge of how much of the perturbation $\eta$ is aligned with eigenvectors of $\nabla p(w)$. The condition can be met \emph{conservatively} if \eqref{conservative_stability_zeta} is met, this requires that the additional condition that $\nabla p(w)$ is negative semi-definite. For Newton and LRSFN, this condition is met conservatively if the following spectral radius bounds are met:
\begin{subequations}
\begin{align}
  &\|[\nabla^2 F(w)]^{-1} \nabla^3 F(w) [\nabla^2 F(w)]^{-1} \nabla F(w)\| \leq 1 \\
  &\|(|\nabla^2 F^{(r)}| + \gamma I)^{-1} (\nabla |\nabla^2 F^{(r)}|)(|\nabla^2 F^{(r)}| + \gamma I)^{-1}\nabla F\| \leq 1.
\end{align}
\end{subequations}
What do these bounds mean? First order methods require bounds on second derivatives of the objective function for stability, and likewise second order methods require bounds on third derivatives. Both of these matrices are the product of the operator that is inverted acting on the directional derivative of the same operator in the search direction. This directional derivative is a contraction of a third derivative tensor on the search direction; if the Hessian operator is not changing too much along the path of the optimizer then this term can be reasonably bounded. For example, in quadratic optimization problems this term is identically zero. This is also a reasonable assumption in practice (at least sufficiently far from initial guesses), in the algorithm AdaHessian \cite{YaoGholamiShenEtAl2020}, the Hessian is only computed every few iterations and Hessian-vector products are reused; the algorithm is able to beat first order methods in difficult deep learning tasks. The inverse operator acting on this directional derivative matrix provides further issues perhaps, since a conservative bound via Cauchy-Schwarz will drastically increase the spectral radius. This is much more of an issue again for stochastic Newton than it is for LRSFN, since stochastic Hessians have many eigenvalues clustering near zero. When the iterates are in the viscinity of a local minimizer, the gradient norm approaches zero, making the condition $\nabla p(w) \preceq 0$ easier to meet, since the non-identity term is scaled by the gradient norm. Thus this stability analysis retrieves the classical stability of Newton methods in the local convergence regime. We have the following bounds for the stability of the second order stochastic Newton methods.

\begin{theorem}{Linear Stability of (Stochastic) Newton and Low Rank Saddle Free Newton}
\label{theorem_linear_stability_newton}
For any $\zeta \in (0,1)$, if the update Jacobian $\nabla p(w_k)$ is negative definite and bounded, and there exist constants $C_0,C_1,C_2$ as in Proposition \ref{proposition_newton_lrsfn_search} to bound the Monte Carlo error of the stochastic search direction, then (stochastic) Newton and LRSFN defined generically as
\begin{align}
     w_{k+1} &= w_k +\Delta t p(w_k) \\
     p(w_k) &= [\nabla^2F(w_k)]^{-1}\nabla F(w_k)  \qquad \text{(Newton)} \\
     p(w_k) &= [|\nabla^2F^{(r)}(w_k)| +\gamma I]^{-1}\nabla F(w_k)  \qquad \text{(LRSFN)}
\end{align} 
are linearly stable, so long as
\begin{equation}
 \Delta t < \min \left\{\frac{1 + \zeta}{|\lambda_1(\nabla p(w_k))|}, \frac{(1 - \zeta)}{\frac{C_0}{\sqrt{N_{X_k}}} + \frac{C_1}{\sqrt{N_{S_k}}}\mathbb{E}_k[\|\nabla F_{X_k}\|] + \frac{C_2}{N_{S_k}}\mathbb{E}_k[\|\nabla F_{X_k}\|]} \right\}.
\end{equation}
\end{theorem}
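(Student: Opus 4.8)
The plan is to treat this as the second-order counterpart of Theorem~\ref{theorem_stability_grad_descent}, instantiating the generic two-part criterion of Theorem~\ref{generic_stability_bound_theorem} for the Newton and LRSFN choices of $p(w)$: verify the geometry condition \eqref{zeta_stability_bounds1} to produce the first entry of the claimed minimum, verify the stochasticity condition \eqref{zeta_stability_bounds2} to produce the second, and conclude. The argument is inductive in $k$ with base case $\eta_0 = 0$: assuming linear stability at iteration $k$, i.e.\ $\mathbb{E}_k[\|\eta_k\|] < 1$, we show the two hypotheses of Theorem~\ref{generic_stability_bound_theorem} force $\mathbb{E}_k[\|\eta_{k+1}\|] < 1$.

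For the stochasticity condition, I would read off from \eqref{stability_system} that for both methods $\xi_\text{MC}(w_k+\eta_k) = p_\text{MC}(w_k+\eta_k) - p(w_k+\eta_k)$ is exactly the search-direction Monte Carlo error studied in Proposition~\ref{proposition_newton_lrsfn_search}. Because assumptions \ref{assumption_grad_mc} and \ref{assumption_hess_mc} are stated uniformly in $w$, that proposition applies verbatim at the perturbed iterate $w_k+\eta_k$, so
\[
\mathbb{E}_k[\|\xi_\text{MC}(w_k+\eta_k)\|] \;\le\; \frac{C_0}{\sqrt{N_{X_k}}} + \frac{C_1}{\sqrt{N_{S_k}}}\,\mathbb{E}_k[\|\nabla F_{X_k}\|] + \frac{C_2}{N_{S_k}}\,\mathbb{E}_k[\|\nabla F_{X_k}\|],
\]
with $C_0,C_1,C_2$ given by \eqref{newton_mc_c0}--\eqref{newton_mc_c2} for Newton and \eqref{lrsfn_mc_c0}--\eqref{lrsfn_mc_c2} for LRSFN. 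Substituting into \eqref{zeta_stability_bounds2} yields precisely the second term of the minimum; the hypothesis that finite $C_0,C_1,C_2$ exist is exactly what excludes the uncontrollable case of rank-degenerate full-Newton Hessians.

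For the geometry condition, I would invoke the conservative Cauchy--Schwarz reduction of \eqref{zeta_stability_bounds1}: under the inductive hypothesis $\mathbb{E}_k[\|\eta_k\|]<1$ it suffices to establish \eqref{conservative_stability_zeta}, and since $\nabla p(w_k)$ is assumed negative definite and bounded (with $\nabla p$ given by the explicit Newton and LRSFN formulas derived just above the theorem), \eqref{typical_explicit_euler_stability_bound} gives $\Delta t < (1+\zeta)/|\lambda_1(\nabla p(w_k))|$, the first term of the minimum. With \eqref{zeta_stability_bounds1} and \eqref{zeta_stability_bounds2} both in force whenever $\Delta t$ is below the stated minimum, Theorem~\ref{generic_stability_bound_theorem} returns $\mathbb{E}_k[\|\eta_{k+1}\|]<1$, closing the induction.

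The main obstacle is conceptual rather than computational: once Proposition~\ref{proposition_newton_lrsfn_search} is granted the arithmetic is a few lines, so the real content sits in the two standing hypotheses. The negative-definiteness of $\nabla p(w_k)$ is the loss-landscape condition discussed in the text: conservatively it holds only when the spectral-radius bounds on $[\nabla^2 F]^{-1}\nabla^3 F[\nabla^2 F]^{-1}\nabla F$ (respectively its LRSFN analogue) are met, which is fragile for stochastic Newton because of the eigenvalues of $\nabla^2 F$ clustering near zero and far milder for LRSFN owing to the damping $\gamma$. I would also be explicit that the Cauchy--Schwarz step is lossy: the sharp condition \eqref{zeta_stability_bounds1} can tolerate a larger $\Delta t$ depending on how $\eta_k$ aligns with the eigenbasis of $\nabla p(w_k)$, and if negative-definiteness fails, components of $\eta_k$ along positive eigendirections grow exponentially, so the theorem is genuinely a worst-case guarantee.
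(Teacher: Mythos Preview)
Your proposal is correct and follows the same approach as the paper: the paper's proof is a two-line invocation of \eqref{typical_explicit_euler_stability_bound} (licensed by the negative-definiteness and boundedness assumptions on $\nabla p(w_k)$) together with Theorem~\ref{generic_stability_bound_theorem}, with the Monte Carlo bound supplied by Proposition~\ref{proposition_newton_lrsfn_search} via the hypothesis on $C_0,C_1,C_2$. You have simply written out in full the steps the paper leaves implicit, including the inductive structure and the Cauchy--Schwarz passage from \eqref{zeta_stability_bounds1} to \eqref{conservative_stability_zeta}.
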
 

\begin{proof}
This matrix $\nabla p(w)$ is negative semi-definite by assumption. Since the update Jacobian is bounded by assumption, the result follows from \eqref{typical_explicit_euler_stability_bound} and Theorem \ref{generic_stability_bound_theorem}.
\end{proof}

The takeaway for this result is that the assumptions to conservatively bound the stability of stochastic Newton are very restrictive. Instabilities in the search direction update for stochastic Newton can be drastically amplified by ill-conditioning in the stochastic Hessian. Beyond the obvious per-iteration cost issues in high dimensions, stochastic full Newton is unviable additionally because it is an inherently unstable algorithm. These inherent instabilities due to ill-conditioning are mitigated in LRSFN where the sensitivity to the inverse Hessian is replaced instead by the Levenberg-Marquardt damping parameter. Another important takeaway from this analysis is that while the sensitivity to the stochastic errors and landscape geometry are thoroughly decoupled for gradient descent (Theorem \ref{theorem_stability_grad_descent}), the two are coupled for stochastic second order methods (Proposition \ref{proposition_newton_lrsfn_search}). In these methods, errors in the stochastic gradient can be amplified by the curvature information of the approximated inverse Hessian, and likewise errors in the stochastic Hessian can be amplified by a large gradient. While second order methods are known to be able to take larger steps in deterministic and SAA problems, they may have to take smaller steps in the SA regime due to the interplay of ill-conditioning and statistical error.

The bounds we derived for gradient descent and Newton methods require weak convexity assumptions, which is a bad assumption in practical applications like deep learning. For stochastic second order methods this curvature condition instead requires bounds on the spectral norm of a term that is basically a Hessian-preconditioned directional derivative of the third derivative tensor ( $\nabla^3 F(w)$). This bound can hold even for strongly nonconvex problems, so formally stochastic Newton methods do not require as strict of convexity assumptions as gradient descent for conservative bounds on stability. Granted, if the Hessian-like operator that is inverted is not enforced to be positive-definite the resulting direction is not even guaranteed to be a descent direction. This is handled naturally by LRSFN, where the inverted operator is guaranteed to be positive-definite. In the presence of perfect gradient and Hessian information, the stability depends only on the optimization geometry. For convex problems, when the full Hessian is inverted and the problem has a well bounded third derivative tensor, one can take larger steps without creating instabilities, agreeing with classical Newton asymptotics.

In the next section we investigate the stability of LRSFN and other optimizers on deterministic and stochastic problems. The results demonstrate that even when very small steps are required in the presence of large stochastic noise and steep curvature information, LRSFN can still have significant upside over other conventional methods.

\section{Numerical Results} \label{section:numerical_results}

In this section we evaluate the numerical performance of the LRSFN algorithm relative to other methods on problems ranging from nonconvex analytic functions to highly stochastic deep neural network training. Since we are ultimately interested in the performance of LRSFN in the SA regime, we analyze the performance of the optimizers for fixed step lengths which are motivated by the analysis in Section \ref{section:stability}. LRSFN can (and should) be adapted to SAA/deterministic problems by employing globalization and adaptive range finding strategies, but that is outside of the scope of this work (see \cite{OLearyRoseberryAlgerGhattas2019}). We thus use deterministic problems as a means of understanding how effects from optimization geometry, absent of statistical noise, can effect the choice of step size for LRSFN, motivating step length considerations in the much more complicated SA regime. We do not consider additional hyperparameters such as reduction on plateau etc. for the sake of simplicity: any hyperparameter tuning that would benefit first order methods is also likely to benefit second order methods.

In the first examples, we consider nonconvex analytic functions with both quick spectral collapse (Michalewicz) and heavy tails (Rosenbrock), the latter demonstrates that LRSFN can have issues for problems with heavy tails the full SFN and Newton do not. The last examples are very high dimensional and stochastic problems from image classification transfer learning. In these examples we compare against standard methods which use very small sample sizes. As predicted by the stability analysis in Section \ref{section:stability}, subsampling errors seem to lead to higher instabilities for LRSFN than they do for Adam and SGD. However even when restrictive step lengths are used LRSFN can still outperform these first order methods for the same amount of compute.

\subsection{Analytic Functions}

In the first set of examples, we consider the \newline Michalewicz \cite{MolgaSmutnicki2005} and the d-dimensional Rosenbrock functions \cite{PaganiWiegand2019}, which can both be extended to arbitrary natural number dimension. All code can be found here \cite{matrixfreenewton}. For these problems we compare the performance of LRSFN with gradient descent, curvature scaled gradient descent (CSGD,where we take $\alpha = \frac{1}{\lambda_1(\nabla^2F)}$), full SFN and Newton. The Michalewicz function,
\begin{equation}
  F(w) = -\sum_{j=1}^d \sin(w_j)\sin^{20}\left(\frac{jw_j^2}{\pi}\right),
\end{equation}
has $d!$ local minima in the hypercube $[0,\pi]^d$, with very steep valleys, highly indefinite Hessian with quick rank decay, it is described as ``needle in haystack'' problem \cite{MolgaSmutnicki2005}. We sample ten different initial guesses around $0\in \mathbb{R}^{d}$, with $d = 100$, and show the results in a table below. For all optimizers we use $\Delta t = \alpha = 1$, and perform 100 optimization iterations. Since these low dimensional analytic functions are computationally negligible to evaluate we do not consider computational cost comparison between first and second order methods.

\begin{table}[H] 
\center
\begin{tabular}{|l|c|c|}
\hline
\enskip & Average minimum & Minimum minimum \\
\hline
GD & $-2.79 \pm 1.66$ & $-7.67$ \\ 
\hline
CSGD & $-8.21 \pm 2.09$ & $-12.41$ \\ 
\hline
Newton & $-0.29 \pm 1.39$ & $-2.28$ \\ 
\hline
Full SFN & $\mathbf{-24.26 \pm 2.65}$ & $\mathbf{-28.51}$ \\ 
\hline
LRSFN $r = 20$ & $-4.35 \pm 0.83$ &$ -5.88$ \\ 
\hline
LRSFN $r = 40$ & $-10.07 \pm 4.73$ &$ -17.78$ \\ 
\hline
LRSFN $r = 50$ & $-19.37 \pm 3.21$ &$ -24.48$ \\ 
\hline
LRSFN $r = 60$ & $-19.34 \pm 2.91$ &$ -24.71$ \\ 
\hline
LRSFN $r = 80$ & $-22.13 \pm 2.59$ &$ -26.35$ \\ 
\hline
\end{tabular}
\caption{Summary of results for Michalewicz optimization}
\label{table_michalewicz}
\end{table}

\begin{figure}[H] 
\begin{subfigure}{0.5\textwidth}
\includegraphics[width = \textwidth]{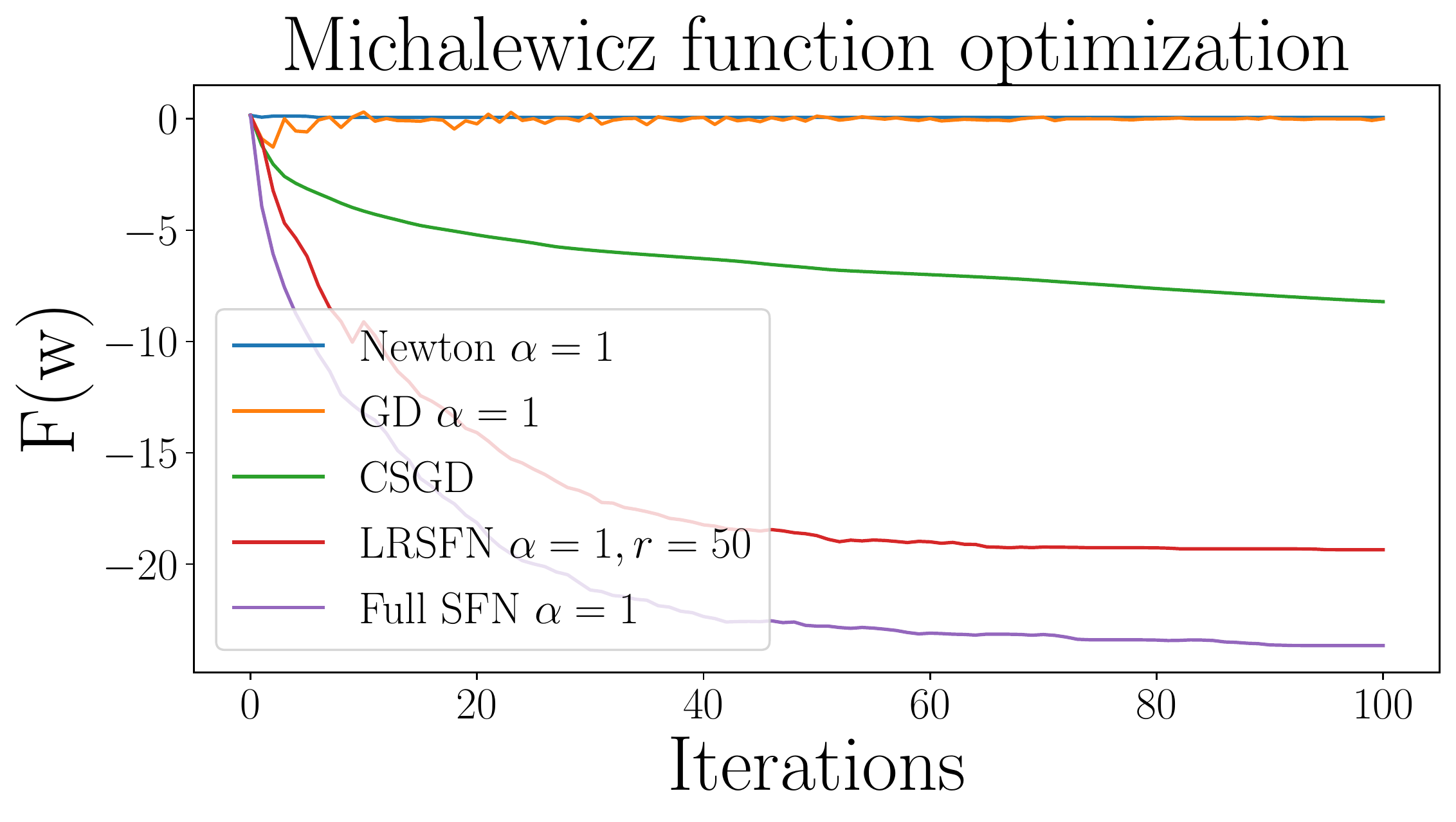}

\end{subfigure}%
\begin{subfigure}{0.5\textwidth}
\includegraphics[width = \textwidth]{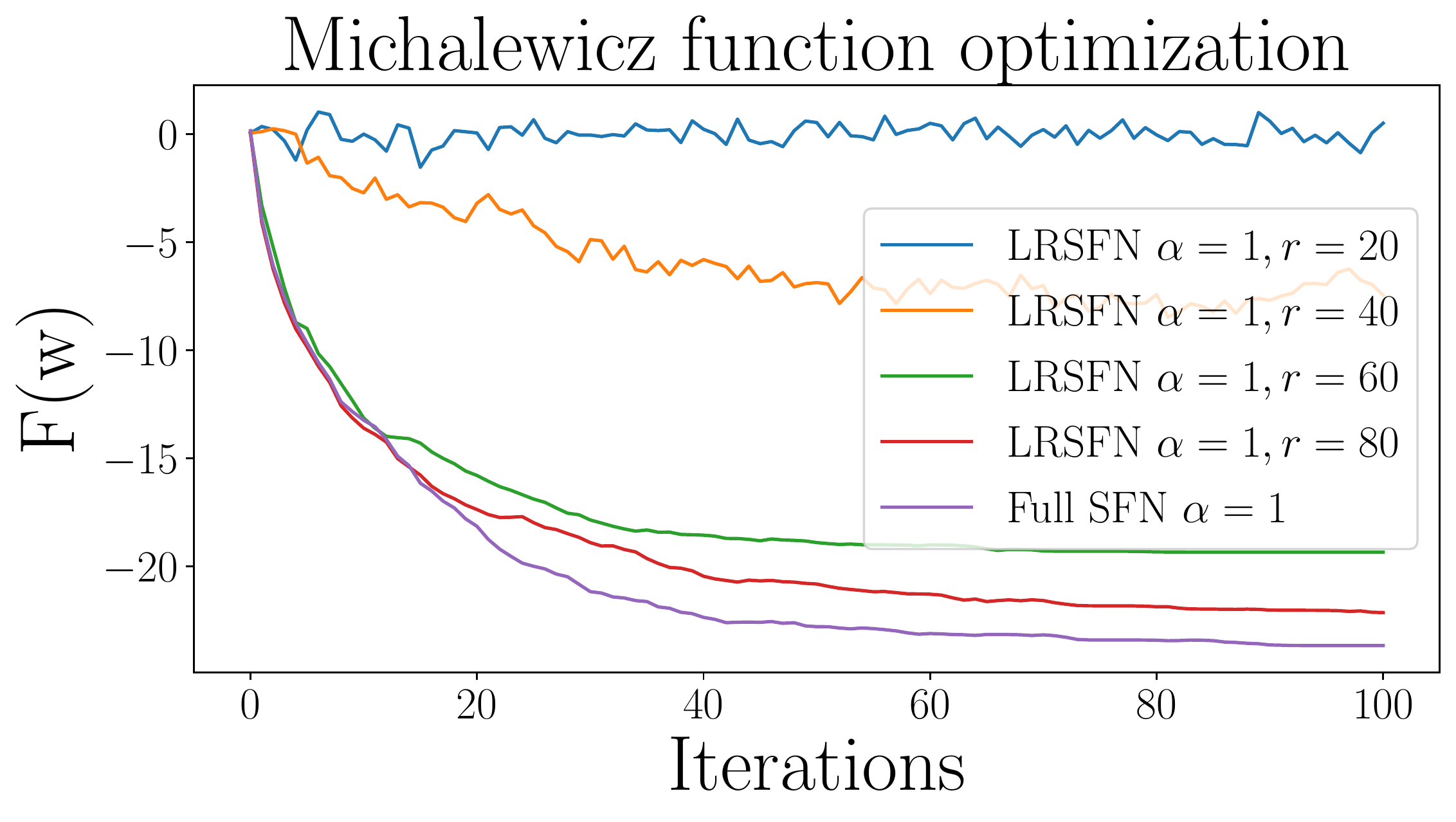}
\end{subfigure}
\caption{Michalewicz losses vs iterations averaged across all ten samples}
\label{michalewicz_losses}
\end{figure}

\begin{figure}[H] 
\begin{subfigure}{0.33\textwidth}
\includegraphics[width = \textwidth]{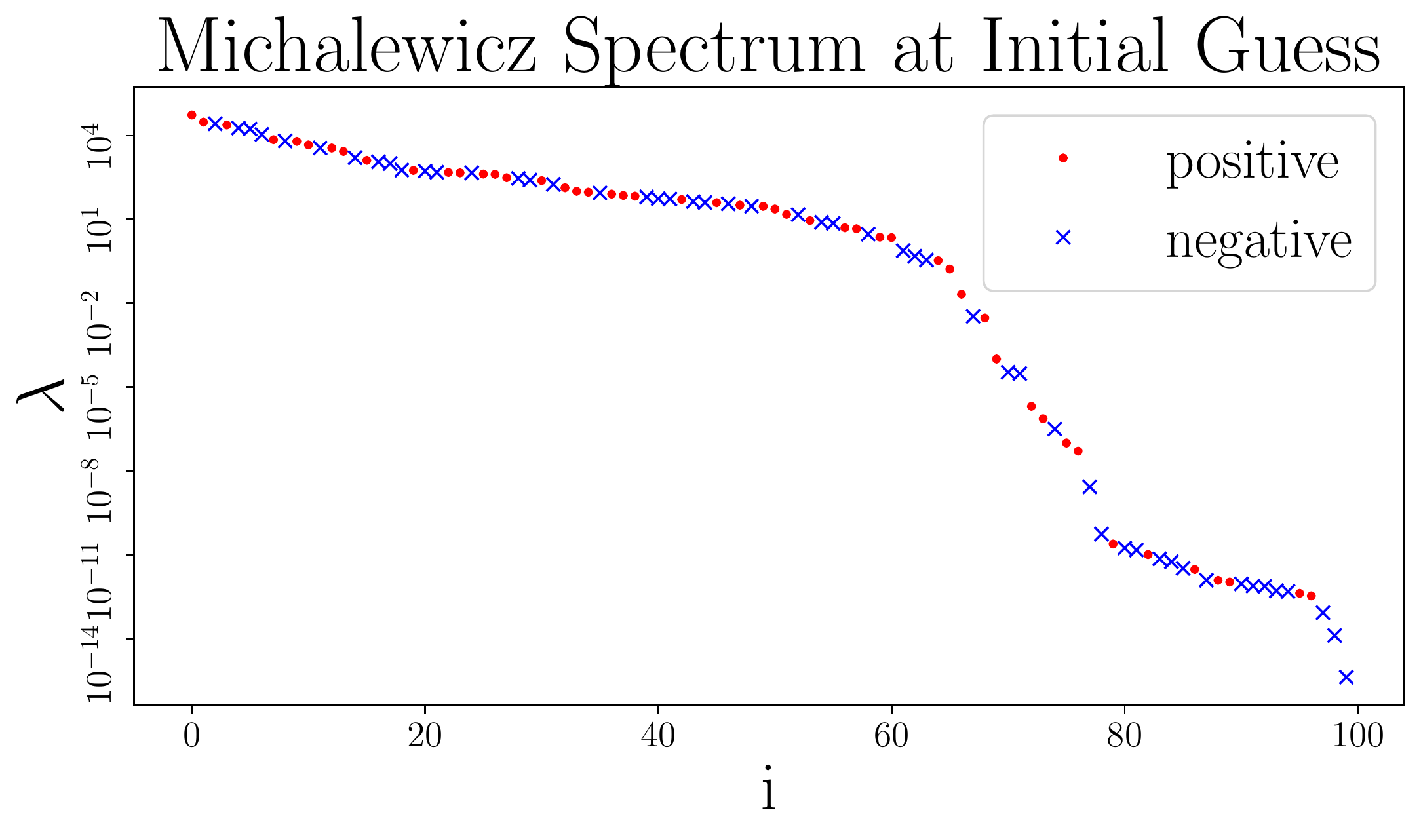}

\end{subfigure}%
\begin{subfigure}{0.33\textwidth}
\includegraphics[width = \textwidth]{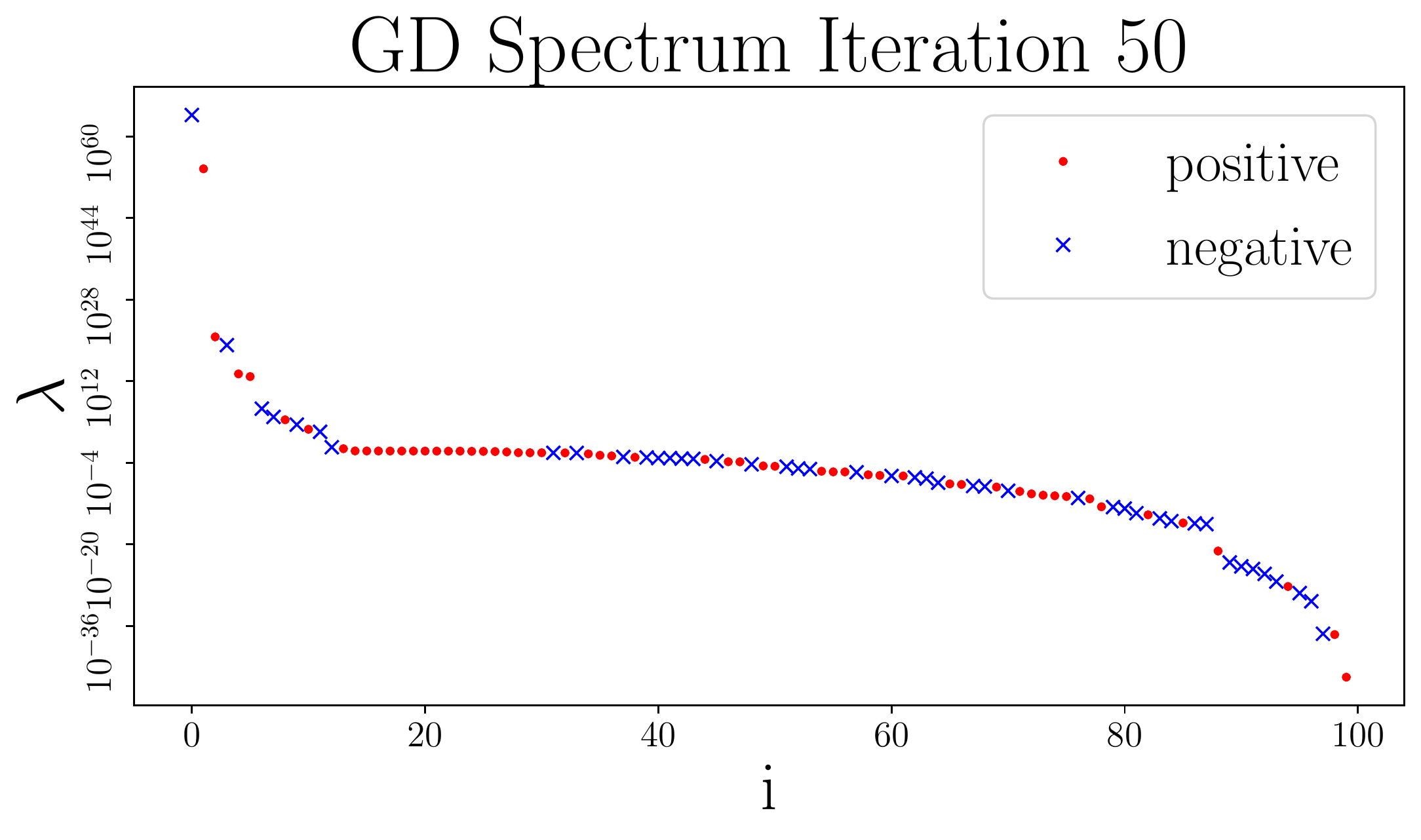}
\end{subfigure}%
\begin{subfigure}{0.33\textwidth}
\includegraphics[width = \textwidth]{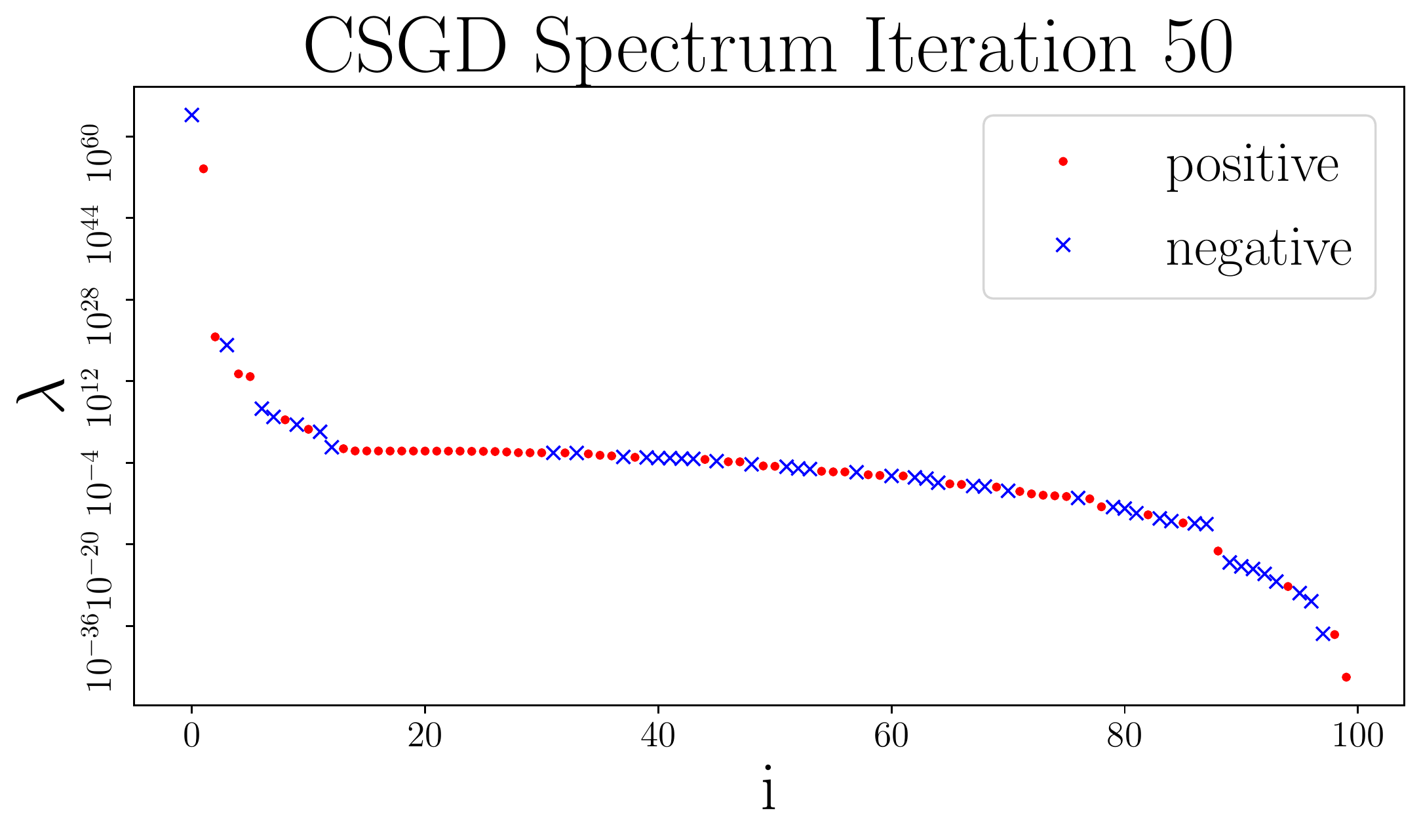}
\end{subfigure}
\begin{subfigure}{0.33\textwidth}
\includegraphics[width = \textwidth]{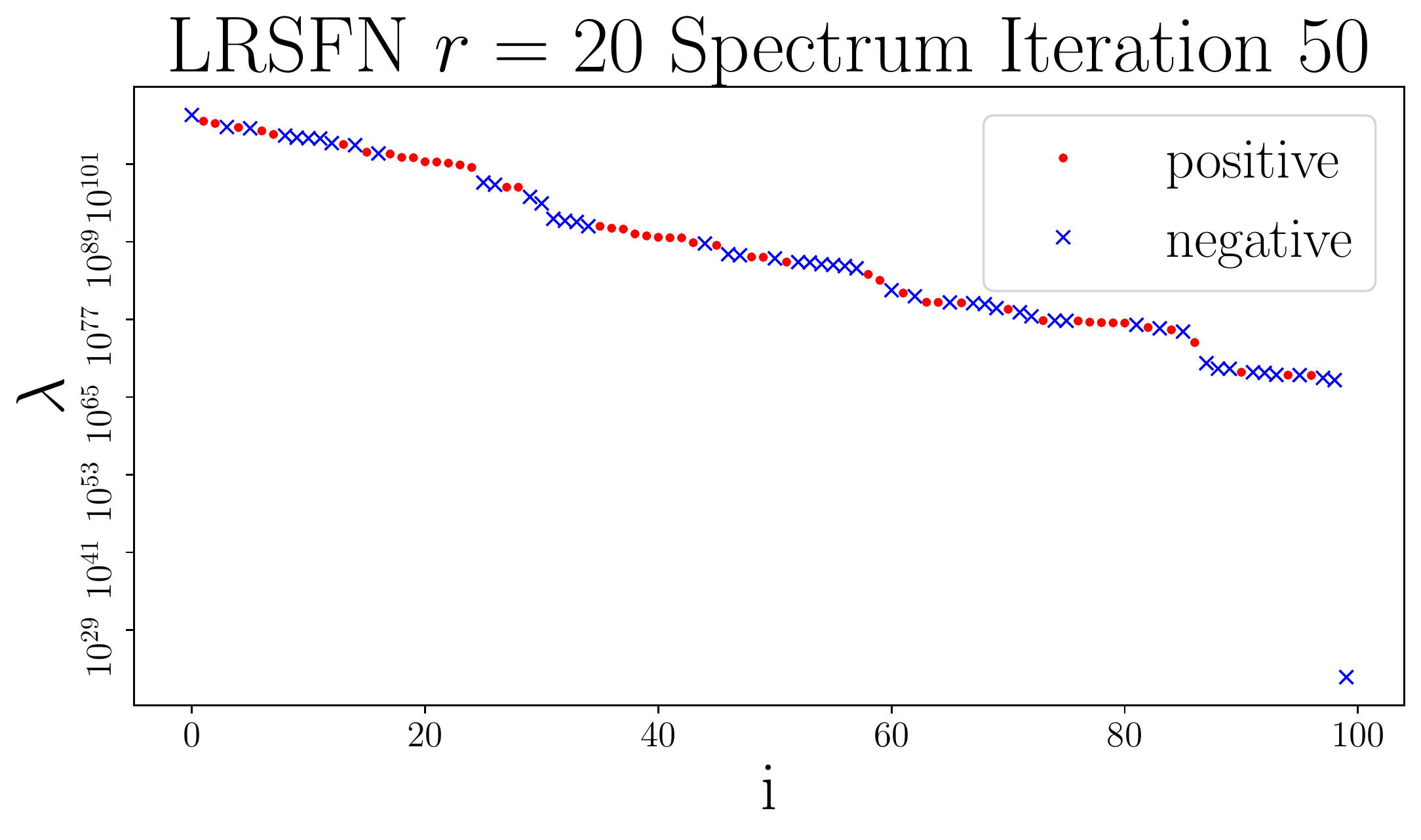}
\end{subfigure}%
\begin{subfigure}{0.33\textwidth}
\includegraphics[width = \textwidth]{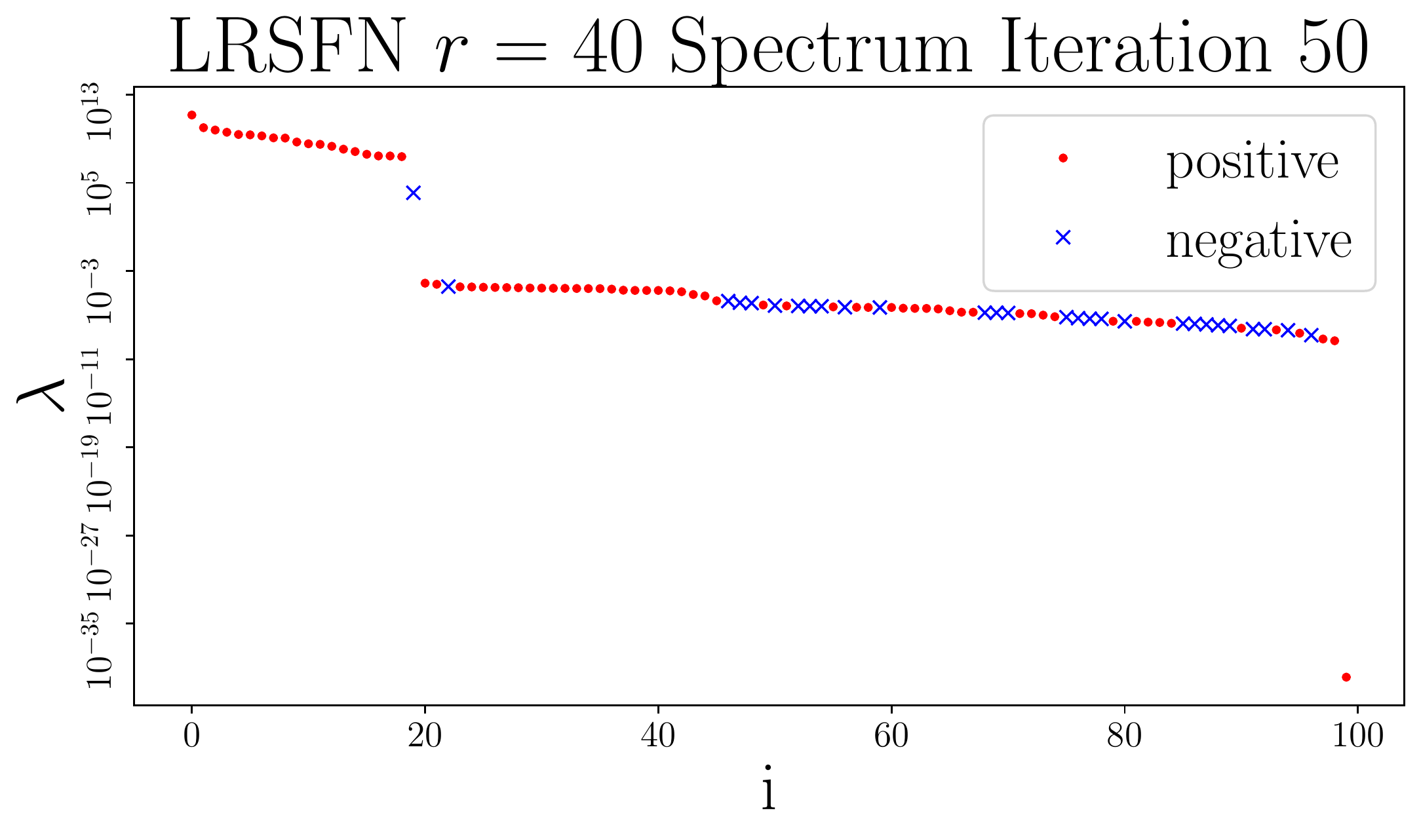}
\end{subfigure}%
\begin{subfigure}{0.33\textwidth}
\includegraphics[width = \textwidth]{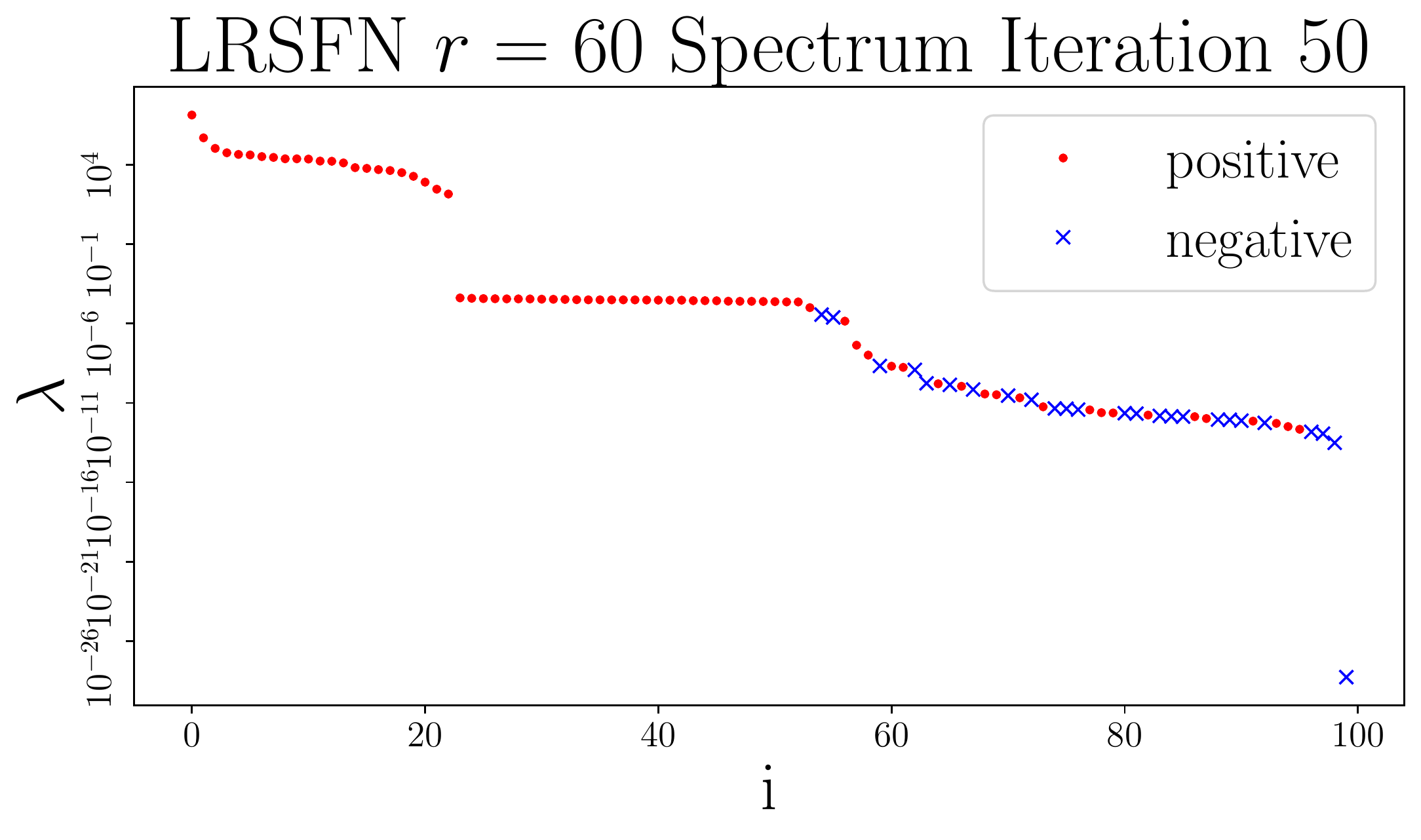}
\end{subfigure}
\caption{Various Hessian spectra for Michalewicz optimization}
\label{michalewicz_spectra}
\end{figure}

All of the optimizers tested for this problem were \emph{stable} in the sense that they did not diverge, but Newton and GD oscillated and performed poorly. The poor performance of Newton is due to the indefiniteness of the Hessian; the iterates oscillate since descent is not guaranteed. Gradient descent needs to be rescaled and curvature information is useful for escaping ``valleys'' for this problem. As seen in Figure \ref{michalewicz_losses}, CSGD performs much better than GD with $\alpha = 1$ for this problem: showing the difficulty of finding optimal scaling for GD (and other related first order methods) in practice. For this problem, full SFN performs the best of all the methods. The right plot in Figure \ref{michalewicz_losses}, and Table \ref{table_michalewicz} demonstrate that LRSFN essentially interpolates between GD and full SFN on rank. Spectral plots shown in Figure \ref{michalewicz_spectra} show the Hessian spectrum at initial guess, and at the 50th iteration of GD,CSGD and LRSFN for $r = 20,40,60$. All plots demonstrate the indefiniteness of the problem; the plots for GD and CSGD demonstrate the issues first order methods may have escaping indefiniteness. The comparison of LRSFN at iteration 50 show that for $r =40,60$ LRSFN is able to roughly resolve indefiniteness in the first $r$ modes. For the choice of $r = 20$, LRSFN is unable to resolve indefiniteness, possibly explaining its general poor performance as seen in Table \ref{table_michalewicz} and Figure \ref{michalewicz_losses} (note that  interestingly CSGD performs better than LRSFN $r = 20$). This example demonstrates the viability of LRSFN and full SFN for problems that exhibit indefiniteness, and substantial rank decay (steepness). The next example (Rosenbrock) demonstrates that rank truncation can create instabilities when the Hessian has heavy tails. The d-dimensional Rosenbrock function,
\begin{equation}
  F(w) = \sum_{j=1}^{d-1}100(w_{j+1} -w_j^2)^2 + (1-w_j)^2,
\end{equation}
has heavy tails (approximately constant spectrum!), it is minimized when $w = \mathds{1}^T$. This example demonstrates a problem for which both first order methods and LRSFN struggle, but for which the full Newton methods are exceptional. In this example we take $d = 10$, and $w_0 = \mathbf{0}\in \mathbb{R}^{10}$. The losses vs iterations are summarized below in Figure \ref{rosenbrock_plots}.

\begin{figure}[H] 
\begin{subfigure}{0.5\textwidth}
\includegraphics[width = \textwidth]{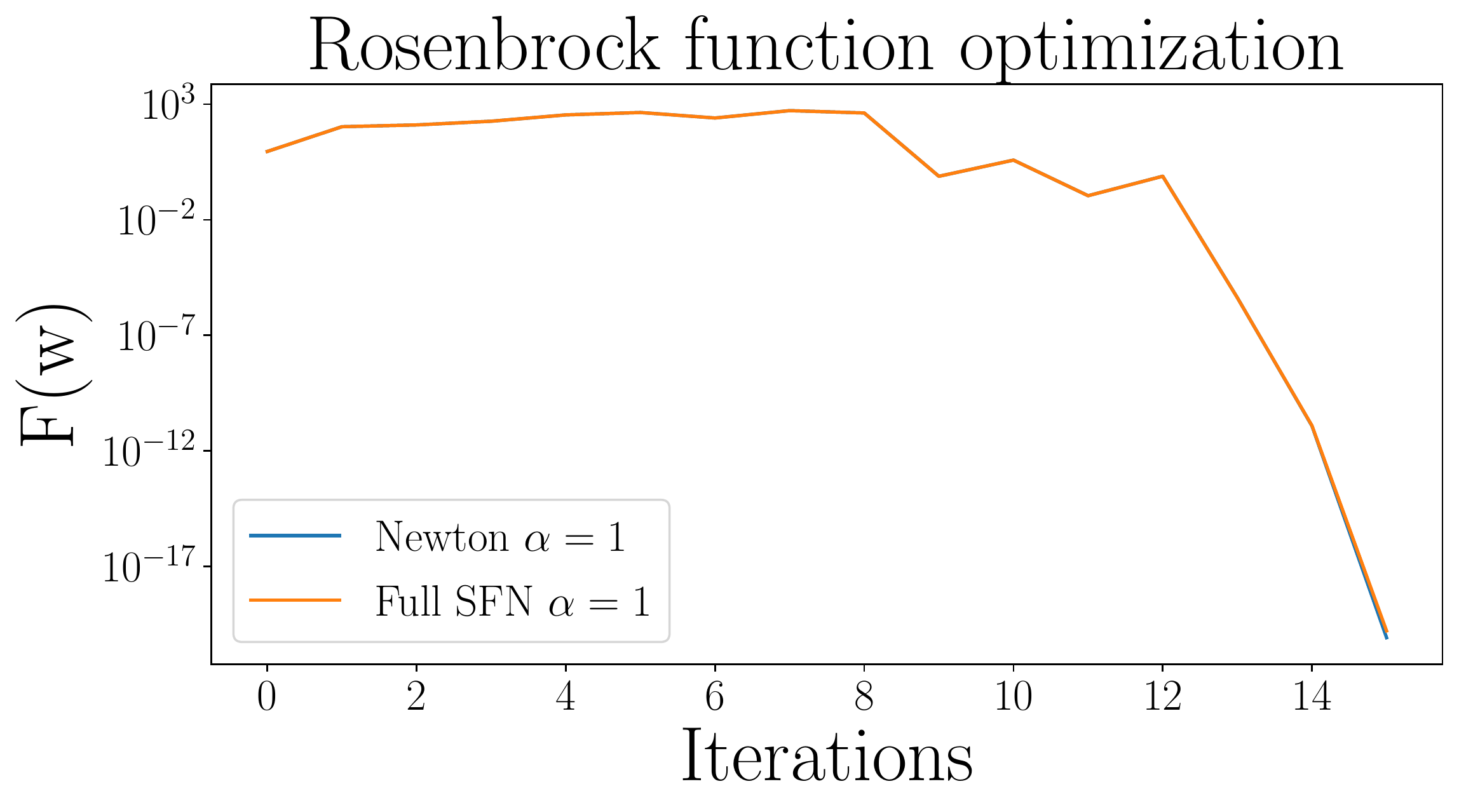}

\end{subfigure}%
\begin{subfigure}{0.5\textwidth}
\includegraphics[width = \textwidth]{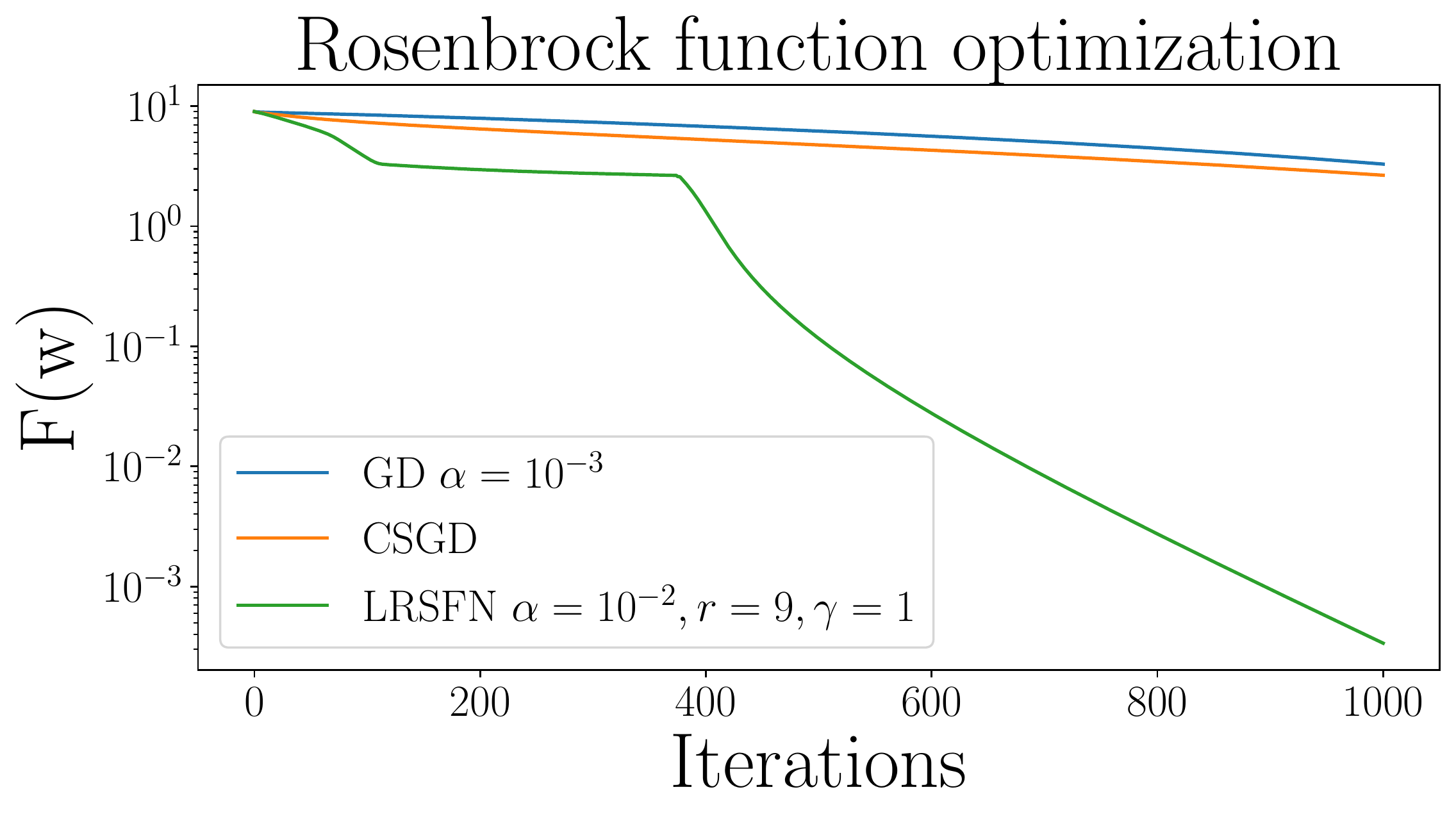}
\end{subfigure}
\caption{Comparison of methods for the 10 dimensional Rosenbrock function}
\label{rosenbrock_plots}
\end{figure}
Both Newton and full SFN converge to the the minimizer in 15 iterations for this problem, after first significantly increasing the objective function. GD is unstable for fixed learning rates $\alpha > 2\times 10^{-3}$, both GD $\alpha = 10^{-3}$ and CSGD are slow to converge. Interestingly LRSFN with $r = d-1 = 9$ performs very poorly for this problem. In order to improve stability and convergence, one has to take not only small steps, but also modify $\gamma$. The reason for this is because the objective function has heavy tails (specifically situations in which $|\lambda_d| $ is comparable to $|\lambda_1|$). LRSFN performs generally poorly for the d-dimensional Rosenbrock function, even with damping parameter $\gamma$ being increased in order to account for the heavy tail truncation. In the Michalewicz problem one can find reasonable $r$ such that the ratio of $|\lambda_1|/|\lambda_r|$ is a few orders of magnitude. For the Rosenbrock problem this is not the case, and this is a key issue for LRSFN; as one might expect, it is not well suited to problems with heavy tails.

In order to make use of LRSFN we argue three ingredients are necessary: very high dimension, $d$, a stochastic objective function that can be subsampled to reduce per-iteration cost and critically a spectrum that exhibits some substantial rank decay in early modes (note this is not necessarily ``low rank'', per se). The significant reduction in a small subspace means that we can get away with compressing a small portion of the spectrum, and the stochasticity means that we can further bring down the cost of the Hessian matrix-vector products relative to the gradient by subsampling the Hessian even further. The next example highlights the strenghts of LRSFN, because it possesses all of these features.

\subsection{CIFAR\{10,100\} transfer learning}

Transfer learning is a common deep learning paradigm used extensively in industrial and academic applications \cite{WeissKhoshgoftaarWang2016}. Pre-trained models can make use of extensive datasets representative of a particular application which can then be used as initial guesses for training in specific applications where few data are available. In the following two examples we use a ResNet50 neural network \cite{HeZhangRenEtAl2016}, pre-trained on ImageNet \cite{DengDongSocherEtAl2009}, which then is adapted to CIFAR\{10,100\} classification \cite{KrizhevskyNairHinton2010}. For the CIFAR10 example (which has 10 classes), the resulting network weight dimension is $d = 21,600,074$, while for CIFAR100 (which has 100 classes) the weight dimension is $d = 28,035,044$. For both datasets, all $50,000$ training data are used, of the $10,000$ testing data, $2,000$ are set aside for validation, and the other $8,000$ are used for testing post training. The keras default of $N_{X_k}=32$ are used for gradient subsampling. We take the Hessian subsampling to be $N_{S_k} = 8$ to demonstrate the opportunities for additional computational economy. The combination of these two sample sizes introduces a significant amount of Monte Carlo stochasticity in each iteration. For this reason, very small step lengths are required for LRSFN (as predicted by the analysis in Section \ref{section:stability}). We compare directly against Adam and SGD which are standard optimizers for this type of problem, we consider many different step lengths for each optimizer and report the best. The optimizers are evaluated based on best generalization accuracy for equivalent computation for neural network evaluation, which is the dominant computational cost. We term this equivalent compute metric ``epoch equivalent work'', for first order methods it corresponds to simply epochs, while for second order methods it takes into account the additional queries of the neural network to form the stochastic Hessian. For one iteration of a first order method $N_{X_k}$ evaluations of the neural network are required to compute the gradient, for one iteration of rank $r$ LRSFN, $N_{X_K} + 2rN_{S_k}$ evaluations are required to compute the gradient and Hessian. We use rank $r = 40$ to demonstrate that highly compressed Hessian spectra can be useful so long as the spectrum has some substantial decay. At the end of each equivalent epoch, validation accuracy is computed and the best weights found during training validation are returned from the optimization procedure.

The goal for this problem is to find the weights $w^*$ with the best generalization accuracy to unseen data, this goal is independent of what loss function is used. For all examples we test out two different loss functions: categorical cross entropy, and a sum of categorical cross entropy and least-squares misfit. The latter was found to work well for LRSFN by an accident. We report the best for each optimizer across both loss functions. Results did not change drastically between the two loss functions.

Note that we do not compare wall-clock directly since our code is an unoptimized research code, and the Adam and SGD are highly optimized industry codes in tensorflow/keras, where automatic differentiation is explicitly optimized for first order methods, so it would not constitute a meaningful comparison. Details about the implementation can be found here \cite{hessianlearn}. Results are reported marginalized over 5 different seeds for the weights and partitioning of the training data. For one of the CIFAR10 seeds, LRSFN blew up at the initial guess, so this seed (2) was skipped. Instabilities (poor conditioning) at initial guesses are common for Newton methods in other settings, in these cases often a first order method or quasi-Newton method is employed for a fixed number of iterations first before switching to Newton to avoid these issues (this is done in hIPPYlib \cite{VillaPetraGhattas20}). Such a routine is outside of the scope of this work, where we want to compare optimizers directly, so we simply skipped this one seed, so for CIFAR10 the results report averages over seeds for which LRSFN did not blow up at the initial guess; no such issue occurred for CIFAR100. For reproducibility the seeds used for CIFAR10 were $\{0,1,3,4,5\}$, the seeds used for CIFAR100 were $\{0,1,2,3,4\}$. Due to the many convolution layers training for these types of models typically require access to modern GPUs, where the convolution operations can be executed quickly. These results were run on a machine with two NVIDIA A100 GPUS, each with 40GB of RAM, with a CUDA enabled installation of tensorflow 2.4.1. Only one GPU was used for each optimization. 

The best step lengths for Adam ($\alpha = 10^{-4}$) and SGD ($\alpha = 10^{-1}$) were the same for both CIFAR10 and CIFAR100. LRSFN was unstable for step lengths that we tested greater than $\alpha = 10^{-4}$, and performed the best for $\alpha = 10^{-5}$. The best validation and testing accuracies are shown below in Table \ref{table_cifar}, plots of validation vs accuracy for those runs are shown in Figure \ref{validation_vs_accuracy_cifar}.

\begin{table}[H] 
\center
\begin{tabular}{|l|c|c|c|c|}
\hline
\enskip & \multicolumn{2}{c|}{CIFAR10} & \multicolumn{2}{c|}{CIFAR100} \\
\hline
\enskip & Validation & Testing & Validation & Testing \\
\hline
SGD  & $91.9 \pm 0.55\% $ & $91.5 \pm 0.50\%$ & $70.0 \pm 0.77\% $ & $69.2 \pm 0.18\%$\\ 
\hline
Adam  & $92.3 \pm 0.23\% $ & $91.5 \pm 0.52\%$ & $69.5 \pm 1.11\% $ & $69.0 \pm 0.32\%$\\ 
\hline
LRSFN  & $\mathbf{94.5 \pm 0.27\% }$& $\mathbf{93.8 \pm 0.09\% }$ & $\mathbf{76.0 \pm 0.48\%}$& $\mathbf{75.7 \pm 0.32\%}$\\ 
\hline
\end{tabular}
\caption{Accuracy results for CIFAR\{10,100\} transfer learning (mean $\pm$ one standard deviation). For SGD $\alpha = 10^{-1}$, for Adam $\alpha = 10^{-4}$, for LRSFN $\alpha = 10^{-5}$ and $r = 40$, all fixed.}
\label{table_cifar}
\end{table}


\begin{figure}[H] 
\begin{subfigure}{0.5\textwidth}
\includegraphics[width = \textwidth]{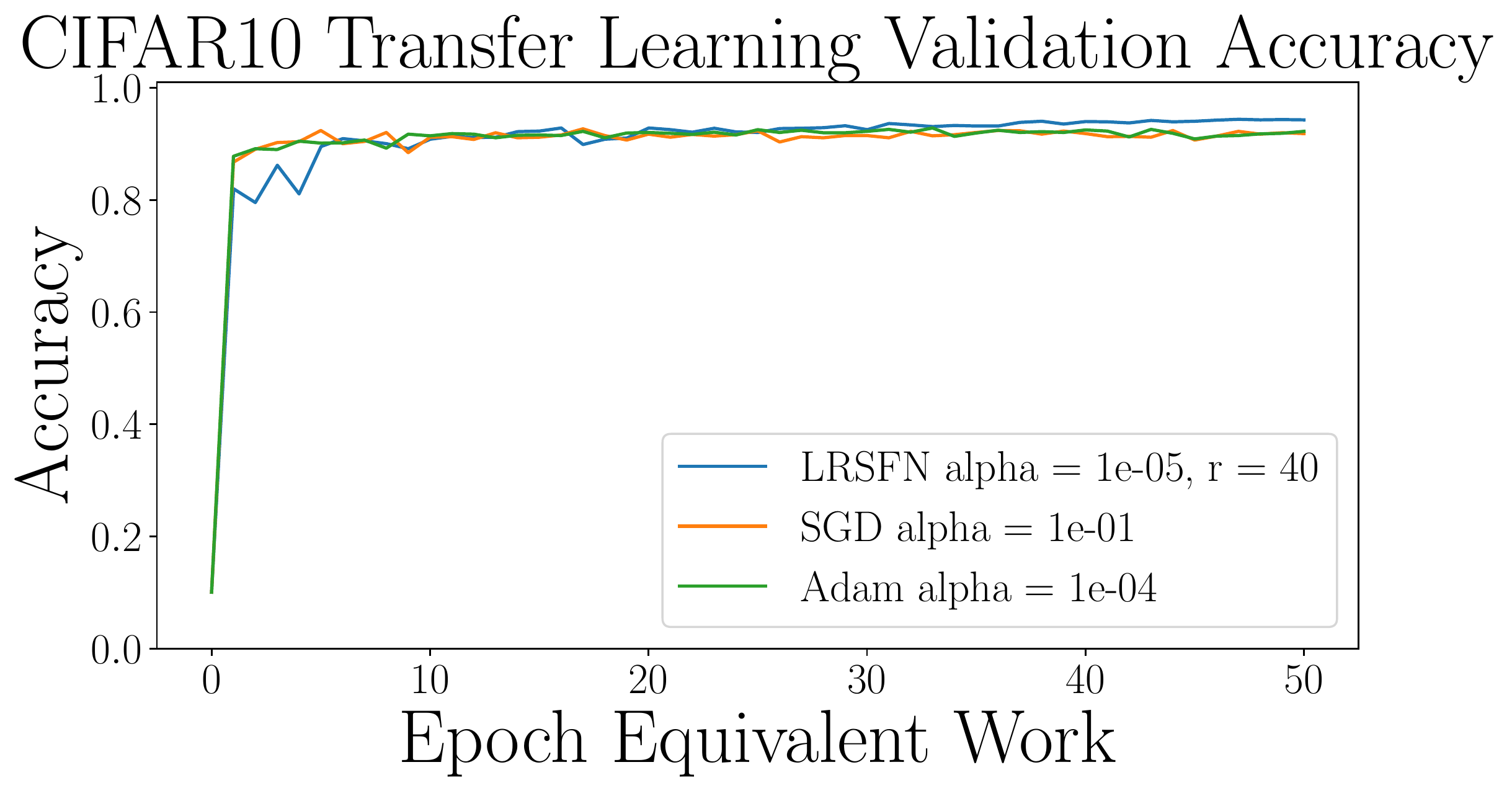}

\end{subfigure}%
\begin{subfigure}{0.5\textwidth}
\includegraphics[width = \textwidth]{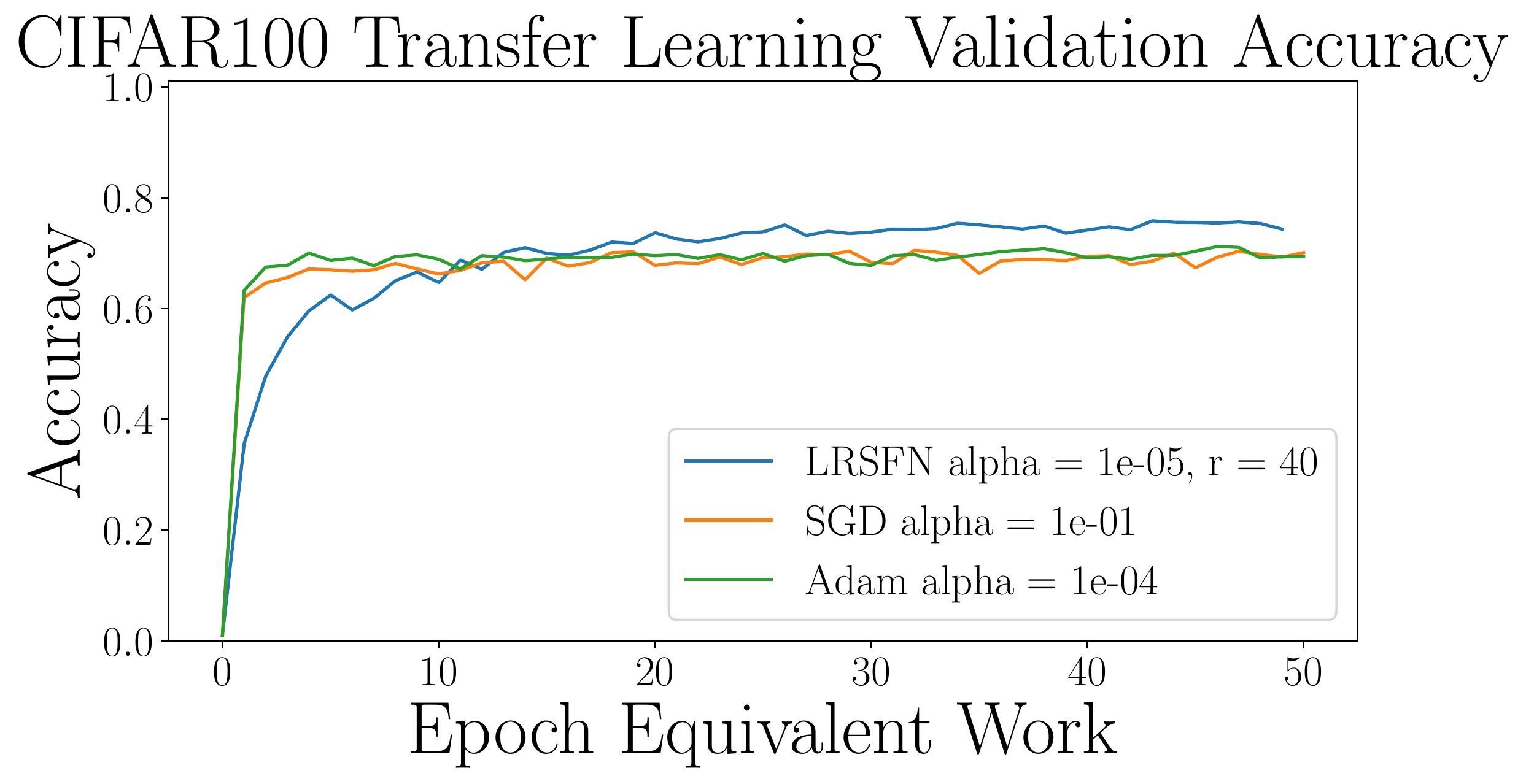}
\end{subfigure}
\caption{Validation accuracy vs training for CIFAR\{10,100\} Transfer Learning}
\label{validation_vs_accuracy_cifar}
\end{figure}
In 50 compute equivalent epochs LRSFN was able to find weights that generalized better for CIFAR10, and significantly better for the much more difficult CIFAR100 problem. In both cases LRSFN doesn't overtake SGD and Adam until around the tenth epoch or so. The spectral decay (in the first 80 modes) are shown below for CIFAR100 training in Figure \ref{cifar100_spectra}, CIFAR10 had quicker spectral decay.

\begin{figure}[H] 
\begin{subfigure}{0.5\textwidth}
\includegraphics[width = \textwidth]{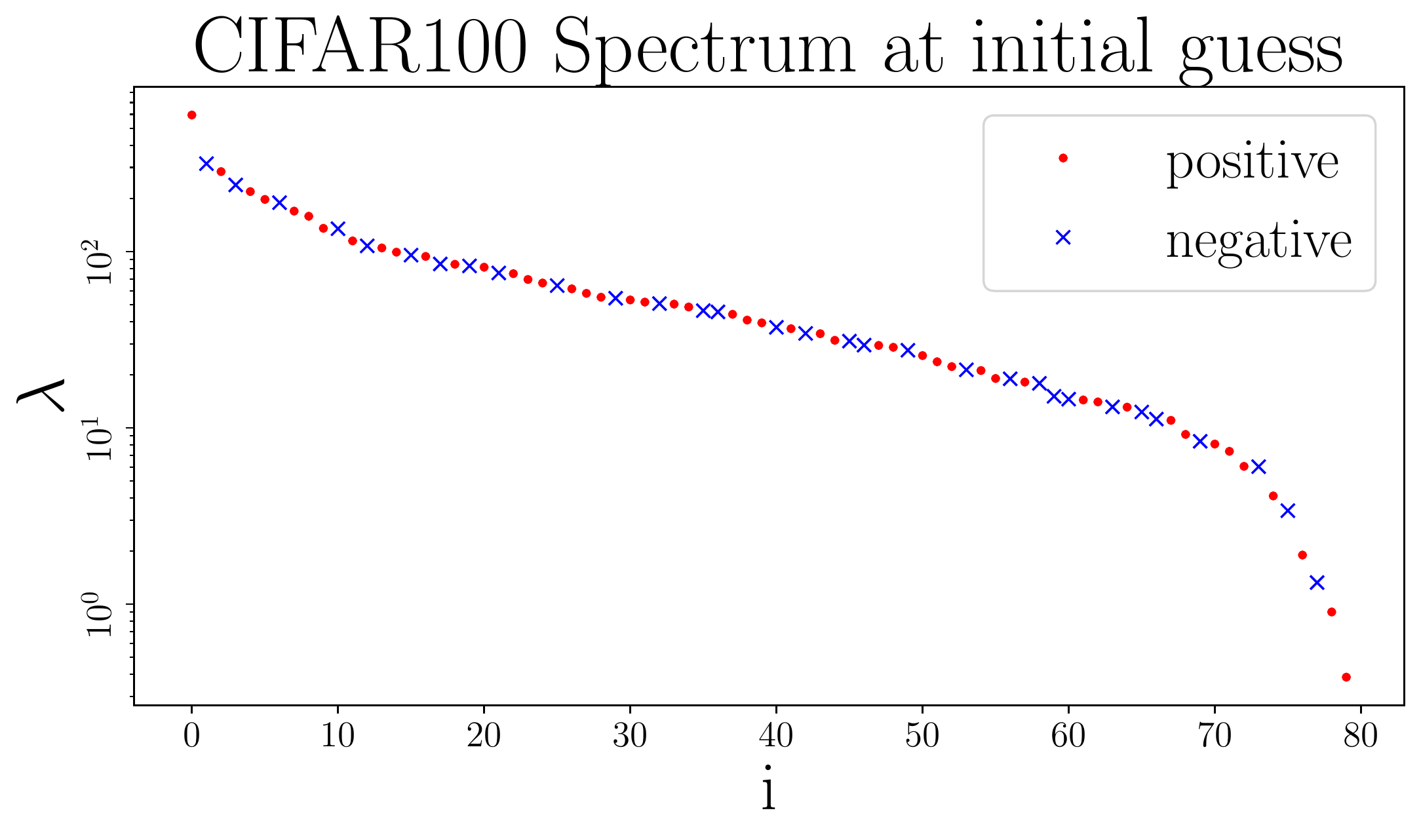}
\end{subfigure}%
\begin{subfigure}{0.5\textwidth}
\includegraphics[width = \textwidth]{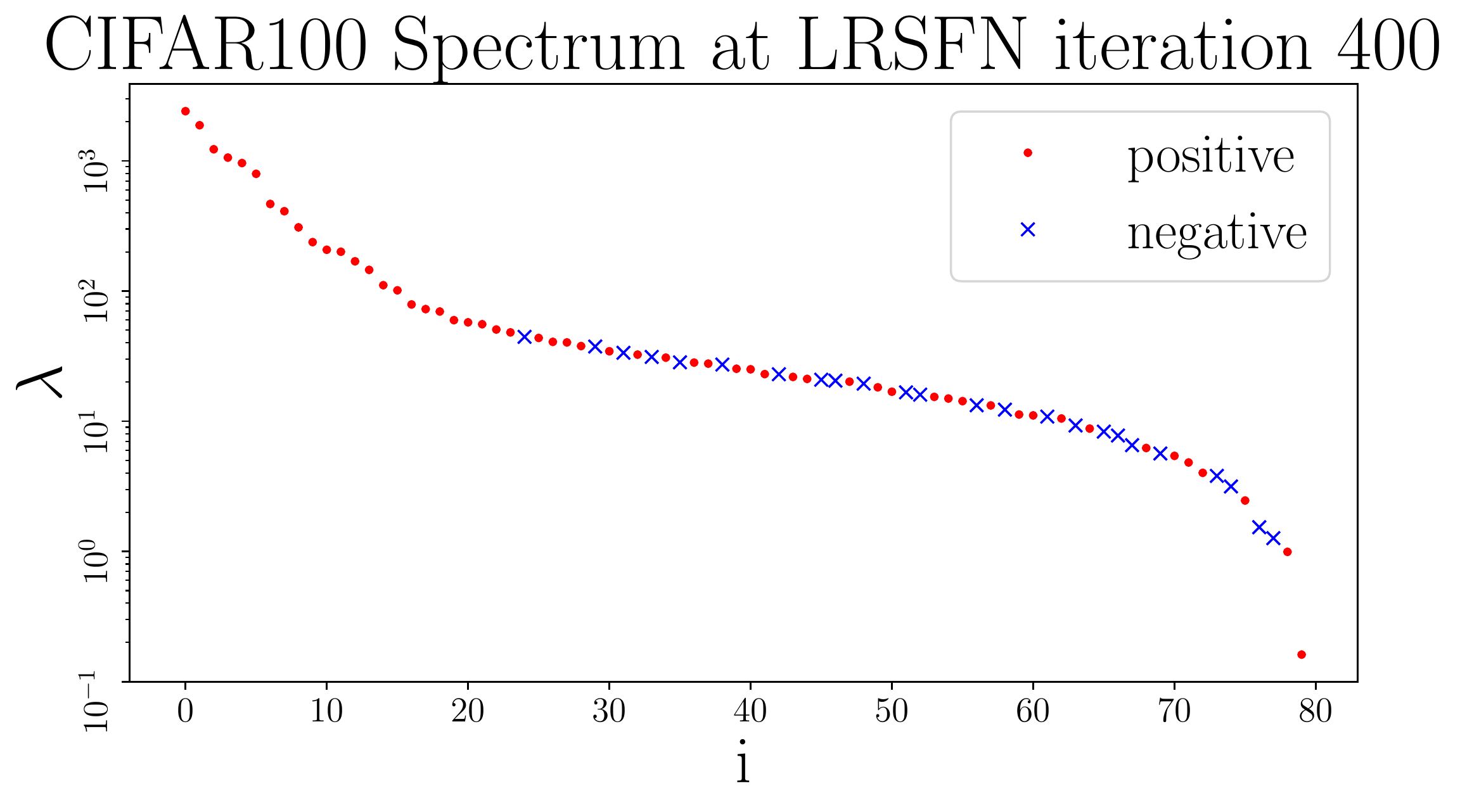}
\end{subfigure}
\caption{CIFAR100 Spectra at initial guess, and later during LRSFN training}
\label{cifar100_spectra}
\end{figure}

This problem has numerical rank significantly higher than $40$, but the spectrum is reduced 2-3 orders of magnitude in the first $40$ modes, suggesting that LRSFN can work on problems that do not necessarily have low rank, so long as they have some significant spectral decay in the first $r$ modes. For many problems picking $r$ to be the actual numerical rank for the problem will make the per-iteration computational cost far too large to be of practical use. The weights are highly indefinite at the initial guess, but as training progresses the dominant eigenvalues start to become more and more positive, which is what one should hope for in this case: to find a candidate solution that has positive Hessian spectrum, this numerical finding is consistent with findings in \cite{OLearyRoseberryAlgerGhattas2019}. This numerical result demonstrates that second order stochastic methods can be of use in modern deep learning problems, many of which have the key ingredients for LRSFN: large parameter dimension, highly stochastic objective function and rank decay.

\section{Conclusion}

This work makes the case for LRSFN as a second order optimizer in the stochastic approximation (SA) setting. By incorporating specific information about the indefiniteness of the optimization landscape, LRSFN can facilitate fast escape from indefinite regions and converge to high quality solutions. Additionally, LRSFN is particularly well suited to modern parallel computing environments, since it leverages parallelism in the approximation of the Hessian via robust randomized methods. Second order methods are heavily studied and used in deterministic  and sample average approximation (SAA) optimization problems. They are not used often in the SA setting, since convergence properties are hobbled by the Monte Carlo errors that are introduced via sampling the Hessian and the gradient \cite{BollapragadaByrdNocedal2018,OLearyRoseberryAlgerGhattas2019}. However, as our numerical results demonstrate, a highly subsampled LRSFN method can outperform popular first order methods on large scale deep learning tasks in terms of generalizability for equivalent computational work.

In the SA setting, where stochasticity dominates the iterative update procedure, globalization procedures are not useful since the statistical estimators of descent are likely to be unreliable. In this setting, one must take small steps, and step lengths should take into account the features of the optimization landscape as well as the level of stochastic noise in the updates. In Section \ref{section:stability}, we investigate the stability of the LRSFN update by first conceiving of the stochastic optimization procedure as simulation of expected risk minimizing flow via explicit Euler. Each update introduces deviations from the minimizing trajectory due to stochastic error. We derive step length conditions so that these deviations do not pose major instabilities. In this analysis we are able to derive bounds for the stability of gradient descent, Newton, and LRSFN. For gradient descent (and first order optimizers in general), these bounds demonstrate that the effects of optimization geometry (i.e. Lipschitz constants, curvature) can be fully decoupled from the effects of gradient Monte Carlo error. In this case the simulated dynamics are stable as long as the step taken satisfies both a condition for the geometry and a condition for the stochasticity (see Theorem \ref{theorem_stability_grad_descent}).

For second order stochastic optimizers, similar bounds are derived, but a key difference exists: the Monte Carlo error cannot be decoupled from the optimization geometry, since the Monte Carlo error involves the product of a stochastic matrix with a stochastic gradient. In this case, ill-conditioning of the Hessian may significantly amplify statistical errors in the gradient, and a large gradient norm can amplify the effects of the Hessian stochastic error (see Proposition \ref{proposition_newton_lrsfn_search}). These results demonstrate that even though one can take larger steps with second order methods reliably for deterministic problems, the converse may be the case for highly stochastic problems: one may have to take very small steps with second order stochastic optimizers.

Numerical results demonstrate that LRSFN can be of great use for highly stochastic high dimensional problems. The last two numerical results show that LRSFN outperform Adam and SGD on high dimensional transfer learning applications, CIFAR\{10,100\} with ResNet50; in the case of CIFAR100 LRSFN drastically outperformed Adam and SGD. We expect that LRSFN may be of use for extremely high dimensional difficult problems in machine learning such as classification transfer learning. In these cases, perhaps the use of Hessian curvature information brings in additional information into the update procedure that facilitates fast escape from indefinite regions and helps guard against overfitting. This would be consistent with what we have seen in the CIFAR\{10,100\} examples.

LRSFN has had difficulty in other problems with heavy tails, such as variational autoencoder training. More work needs to be done to investigate effective strategies to tackle Hessian approximations when the Hessian cannot be compressed efficiently by randomized eigenvalue decompositions. Another issue that can be taken into account to improve the performance of second order optimizers is dealing with ill-conditioning at initial guesses. Sometimes the spectrum is highly indefinite, and has very heavy tails at initial guesses but then the spectrum drops off at subsequent iterations. Occasionally initial steps taken from an initial guess can set second order methods off on a bad trajectory. One obvious workaround is to use a first order or quasi-second order method for a few iterations initially to move away from regions that are difficult for stochastic second order methods. This is done in many other settings such as inverse problems. When switching over to the stochastic second order method, larger steps may be able to be taken, which can lead to faster convergence. 

For the deep transfer learning problem, we do not compare methods with respect to wall-clock time, since we are comparing our research code against highly optimized industry codes where automatic differentiation is optimized specifically for first order methods. We instead compare the methods in terms of equivalent neural network computations, showing the ability of LRSFN to generalize better than first order methods for the same amount of neural network evaluations. Since Hessian-vector products resemble gradient computations, matrix free Newton methods, such as LRSFN, have the ability to take advantage of all the vectorization, compiler and automatic differentiation optimization, and domain specific hardware that first order methods have enjoyed, provided comparable attention to implementation and tuning is paid.

\section*{Acknowledgements}
Conversations with Anna Yesypenko, Alex Ames, Brendan Keith and Rachel Ward were helpful during the preparation of this manuscript.



\bibliographystyle{siamplain}
\bibliography{local}

\appendix

\section{Additional Analysis for Stability Bounds} \label{section:appendix_bounds}

\begin{lemma}{Monte Carlo Error for Gradients}
\label{grad_mc_lemma}
If Assumption \ref{assumption_grad_mc} is met, then the gradient Monte Carlo error is bounded as follows:
\begin{equation}
    \mathbb{E}_k[\|\nabla F(w_k) - \nabla F_{X_k}(w_k)\|] \leq \frac{v}{\sqrt{N_{S_k}}}
\end{equation}
\end{lemma}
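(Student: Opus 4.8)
The plan is to reduce the claim to the textbook Monte Carlo variance estimate: the subsampled gradient is an unbiased estimator of $\nabla F(w_k)$, its variance scales like $1/N_{X_k}$, and a second-moment bound is converted to a first-moment bound by Jensen's inequality. (Note that the bound stated with $N_{S_k}$ is really in terms of the gradient batch size $N_{X_k}$, consistent with its use in Theorem~\ref{theorem_stability_grad_descent}.)

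First I would write the subsampled gradient as the empirical mean $\nabla F_{X_k}(w_k) = \frac{1}{N_{X_k}}\sum_{i\in X_k}\nabla F_i(w_k)$, where, conditionally on the state at iteration $k$, the summands are i.i.d.\ draws with mean $\nabla F(w_k)$. This yields $\mathbb{E}_k[\nabla F_{X_k}(w_k)] = \nabla F(w_k)$, so that $e_k := \nabla F(w_k) - \nabla F_{X_k}(w_k)$ is a centered random vector. Next I would use the identity $\mathbb{E}_k[\|e_k\|^2] = tr(\text{Cov}(e_k))$ valid for any centered vector, together with independence of the batch elements, which gives $\text{Cov}(e_k) = \frac{1}{N_{X_k}}\text{Cov}(\nabla F_i(w_k))$. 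Taking traces and invoking Assumption~\ref{assumption_grad_mc} then gives $\mathbb{E}_k[\|e_k\|^2] \leq \frac{v^2}{N_{X_k}}$.

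Finally I would apply Jensen's inequality (concavity of $t\mapsto\sqrt{t}$): $\mathbb{E}_k[\|e_k\|] \leq \sqrt{\mathbb{E}_k[\|e_k\|^2]} \leq \frac{v}{\sqrt{N_{X_k}}}$, which is the asserted estimate.

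I do not anticipate a genuine obstacle here; the only points needing care are bookkeeping ones. One is making explicit the independence (with-replacement sampling) of the batch so that the covariance of the average carries the factor $1/N_{X_k}$ — under without-replacement sampling one instead picks up a finite-population correction factor $\le 1$, so the stated inequality still holds. The other is simply recording the identity $\mathbb{E}\|e\|^2 = tr(\text{Cov}(e))$ for centered $e$, which follows by expanding the norm and linearity of expectation. The lemma is the standard Monte Carlo variance bound adapted to the conditional-expectation notation $\mathbb{E}_k$.
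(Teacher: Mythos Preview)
Your proposal is correct and follows essentially the same approach as the paper: Jensen's inequality to pass from first to second moment, the identity $\mathbb{E}_k[\|e_k\|^2]=\operatorname{tr}(\operatorname{Cov}(e_k))$, and the $1/N_{X_k}$ scaling of the covariance of an i.i.d.\ average combined with Assumption~\ref{assumption_grad_mc}. You also correctly flag that the bound should read $N_{X_k}$ rather than $N_{S_k}$.
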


\begin{proof}
This is a standard result, see for example Theorem 2.1 and Lemma 2.2 in \cite{BollapragadaByrdNocedal2018}. We recount it here. By Jensen's inequality and the assumption we have
\begin{align}
\left( \mathbb{E}_k[\|\nabla F(w_k) - \nabla F_{X_k}(w_k)\|]\right)^2 &\leq \left( \mathbb{E}_k[\|\nabla F(w_k) - \nabla F_{X_k}(w_k)\|^2]\right) \nonumber \\ 
&= \text{tr}\left( \text{Cov}\left(\frac{1}{N_{X_k}}\sum_{i=1}^{N_{X_k}}\nabla F_i(w_k) \right) \right) \nonumber \\
&\leq \frac{1}{N_{X_k}}\text{tr}\left( \text{Cov}\left(\nabla F_i(w_k) \right) \right) \leq \frac{v^2}{N_{X_k}}.
\end{align}
\end{proof}

\begin{lemma}{Monte Carlo Error for Hessians}
\label{hessian_mc_lemma}
If Assumption \ref{assumption_hess_mc} holds, then the Monte Carlo error for the subsampled Hessian is as follows
\begin{equation}
    \mathbb{E}_k[\| \nabla^2 F_{S_k}(w_k) - \nabla^2 F(w_k) \|_{\ell^2(\mathbb{R}^{d\times d})}] \leq \frac{\sigma}{\sqrt{N_{S_k}}},
\end{equation}
and for the subsampled absolute Hessian:
\begin{equation}
    \mathbb{E}_k[\| |\nabla^2 F_{S_k}(w_k)| - |\nabla^2 F(w_k)| \|_{\ell^2(\mathbb{R}^{d\times d})}] \leq \frac{\sigma^\text{abs}}{\sqrt{N_{S_k}}}
\end{equation}
\end{lemma}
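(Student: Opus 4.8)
The plan is to follow the template of the gradient-error estimate in Lemma~\ref{grad_mc_lemma}, transposed to the matrix spectral norm. First I would use Jensen's inequality to replace the first moment by a second moment, $\big(\mathbb{E}_k[\|\nabla^2 F_{S_k}(w_k)-\nabla^2 F(w_k)\|]\big)^2 \le \mathbb{E}_k[\|\nabla^2 F_{S_k}(w_k)-\nabla^2 F(w_k)\|^2]$, and then exploit symmetry of the error matrix $E := \nabla^2 F_{S_k}(w_k)-\nabla^2 F(w_k)$ to write $\|E\|^2 = \|E^2\|$. It therefore suffices to control $\mathbb{E}_k[\|E^2\|]$.

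Next I would expand the subsampled Hessian as a sample average, $\nabla^2 F_{S_k}(w_k) = \frac{1}{N_{S_k}}\sum_{i\in S_k}\nabla^2 F_i(w_k)$, so that $E = \frac{1}{N_{S_k}}\sum_{i\in S_k} M_i$ with $M_i := \nabla^2 F_i(w_k)-\nabla^2 F(w_k)$ independent and mean zero under $\mathbb{E}_k$. Expanding the square and taking the conditional expectation, the off-diagonal terms vanish by independence and $\mathbb{E}_k[M_i]=0$, leaving $\mathbb{E}_k[E^2] = \frac{1}{N_{S_k}}\mathbb{E}_k[M_1^2]$, and Assumption~\ref{assumption_hess_mc}, Eq.~\eqref{bounded_var_hess_comp}, bounds $\|\mathbb{E}_k[M_1^2]\| \le \sigma^2$, hence $\|\mathbb{E}_k[E^2]\| \le \sigma^2/N_{S_k}$. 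The absolute-Hessian bound would come from the identical argument applied to $\widetilde E := |\nabla^2 F_{S_k}(w_k)| - |\nabla^2 F(w_k)|$ with Eq.~\eqref{bounded_var_abshess_comp} in place of Eq.~\eqref{bounded_var_hess_comp}.

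The hard part is the passage from $\mathbb{E}_k[\|E^2\|]$ to $\|\mathbb{E}_k[E^2]\|$. In the vector setting this step is an identity because $\|v\|^2 = \mathrm{tr}(vv^T)$ and expectation commutes with the trace; here $\|E^2\| \neq \mathrm{tr}(E^2)$, and matrix Jensen gives $\mathbb{E}_k[\|E^2\|] \ge \|\mathbb{E}_k[E^2]\|$ — the inequality we want points the other way. Making the last step fully rigorous therefore requires one of: a matrix Bernstein/Rosenthal concentration inequality, which typically produces a spurious $\sqrt{\log d}$ factor that must be absorbed into the constant $\sigma$; the crude but dimension-dependent route $\mathbb{E}_k[\|E\|^2] \le \mathbb{E}_k[\mathrm{tr}(E^2)] = \mathrm{tr}(\mathbb{E}_k[E^2]) \le d\,\sigma^2/N_{S_k}$; or simply reading Assumption~\ref{assumption_hess_mc} as a hypothesis directly on the second moment of the sample-average error. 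A further subtlety appears only in the absolute-Hessian estimate: since $|\cdot|$ does not commute with averaging, $|\nabla^2 F_{S_k}(w_k)| \neq \frac{1}{N_{S_k}}\sum_{i\in S_k}|\nabla^2 F_i(w_k)|$ in general, so the expansion of $\widetilde E$ as an average of mean-zero component errors is valid only if the operative subsampled absolute Hessian is taken to be $\frac{1}{N_{S_k}}\sum_{i\in S_k}|\nabla^2 F_i(w_k)|$ (or, equivalently, if Eq.~\eqref{bounded_var_abshess_comp} is interpreted as a bound on the corresponding sample-average error); otherwise one needs in addition a perturbation estimate for the matrix absolute value.
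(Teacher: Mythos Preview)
The paper does not actually prove this lemma: it states ``The proof is almost identical to Lemma~\ref{grad_mc_lemma}, so we omit it for brevity'' and points to Lemma~2.3 of Bollapragada--Byrd--Nocedal. Your proposal therefore follows \emph{precisely} the route the paper gestures at---transpose the gradient argument to the matrix setting via Jensen and the sample-average expansion---so in that sense the approach matches.

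Where you go further than the paper is in honestly flagging the two places where the transposition is not automatic. Both of your caveats are real. First, in the vector case $\|v\|^2=\mathrm{tr}(vv^T)$ makes the second-moment step an identity, whereas for the spectral norm $\mathbb{E}_k[\|E\|^2]$ and $\|\mathbb{E}_k[E^2]\|$ are genuinely different quantities and the naive Jensen inequality points the wrong way; the clean $\sigma/\sqrt{N_{S_k}}$ bound without a dimensional or logarithmic factor is only immediate if one reads Assumption~\ref{assumption_hess_mc} as implicitly controlling the sample-average second moment (which is effectively how the cited reference is being used). Second, your observation about the absolute Hessian is on point: $|\nabla^2 F_{S_k}|$ is not a sample average of $|\nabla^2 F_i|$, so the independence/mean-zero decomposition does not apply directly, and Eq.~\eqref{bounded_var_abshess_comp} has to be interpreted as a hypothesis on the relevant sample-average error for the argument to go through verbatim. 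The paper simply does not engage with either subtlety; your write-up is more careful than the source.
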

This is another standard result. The proof is almost identical to Lemma \ref{grad_mc_lemma}, so we omit it for brevity. See Lemma 2.3 in \cite{BollapragadaByrdNocedal2018} for the details of a similar result.

We prove generic results for Monte Carlo errors in search directions of the form $p_k = A^{-1}g$ (and subsampled versions $p_k = A_{S_k}^{-1}g_{X_k}$, where $A$ and $g$ are unspecified, but satisfy Monte Carlo bounds.

The next lemma establishes Monte Carlo errors for the inverse of a matrix ($A^{-1}$), given that the matrix itself has a bounded Monte Carlo error. It is a conservative bound, but shows how the Monte Carlo errors of the stochastic inverse can be sensitive to the conditioning of the matrix being inverted.
\begin{lemma}{Monte Carlo Error for Stochastic Matrix Inverse}
\label{stochastic_inverse_mc_lemma}

Let $A_{S_k}$ be a stochastic matrix with bounded Monte Carlo error $E_\text{MC}^A = A_{S_k} - A$,
\begin{equation}
  \mathbb{E}_k[\|E_\text{MC}^A\|_2] \leq \frac{\sigma^A}{\sqrt{N_{S_k}}}, 
\end{equation}
further, assume $A_{S_k}^{-1}$, $A^{-1}$,$(I + E_\text{MC}^AA^{-1})^{-1}$ and $(I + \frac{1}{2}E_\text{MC}^AA^{-1})^{-1}$ exist, then
\begin{equation}
  \mathbb{E}_k[\|A_{S_k}^{-1} - A^{-1}\|] \leq \frac{C_1}{\sqrt{N_{S_k}}} + \frac{C_2}{N_{S_k}},
\end{equation}
with 
\begin{align}
C_1 &= \frac{\sigma^A}{2}\|A^{-1}\|^2  (1 + \mathbb{E}_k[\|(I + E_\text{MC}^AA^{-1})^{-1}\|]) \\
C_2 &= \frac{(\sigma^A)^2}{4}\|A^{-1}\|^3  \mathbb{E}_k\left[\left\|\left(I +\frac{1}{2}E_\text{MC}^AA^{-1}\right)^{-1}\right\|\right]
\end{align}

\end{lemma}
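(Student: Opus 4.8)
The plan is to write $A_{S_k}^{-1} - A^{-1}$ in a form that exposes the error matrix $E = E_\text{MC}^A$ and then use a Neumann-type resolvent identity together with a symmetric split to keep the constants sharp. First I would write $A_{S_k} = A + E = (I + E A^{-1})A$, so that $A_{S_k}^{-1} = A^{-1}(I + E A^{-1})^{-1}$, hence
\begin{equation}
A_{S_k}^{-1} - A^{-1} = A^{-1}\left[(I + E A^{-1})^{-1} - I\right] = -A^{-1}(I + E A^{-1})^{-1} E A^{-1}.
\end{equation}
This already gives a clean bound $\|A_{S_k}^{-1} - A^{-1}\| \le \|A^{-1}\|^2 \|(I+EA^{-1})^{-1}\| \|E\|$, but taking expectations would produce a single $C/\sqrt{N_{S_k}}$ term rather than the stated two-term bound, and the constant would carry the full $\|(I+EA^{-1})^{-1}\|$ factor without the additive $1$. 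So the refinement is to peel off the first-order term in the Neumann expansion.

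The key step is a second-order resolvent identity. Using $(I + M)^{-1} = I - M + M(I+M)^{-1}M$ (valid as an identity whenever $(I+M)^{-1}$ exists) with $M = \tfrac{1}{2} E A^{-1}$ would be one route, but the cleanest match to the stated constants comes from expanding around the half-step: write
\begin{equation}
(I + E A^{-1})^{-1} - I = -E A^{-1} + E A^{-1}(I + E A^{-1})^{-1} E A^{-1},
\end{equation}
no — to get the factors of $1/2$ and the $\tfrac12 E A^{-1}$ appearing in $C_2$, I would instead symmetrize: set $N = E A^{-1}$ and use the algebraic identity $(I+N)^{-1} - I + N = N(I+N)^{-1}N$, but then re-split the quadratic remainder so that it reads $\tfrac14 N (I + \tfrac12 N)^{-1} N$ after absorbing constants. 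Concretely, the intended decomposition is
\begin{equation}
A_{S_k}^{-1} - A^{-1} = -\tfrac{1}{2} A^{-1} E A^{-1} - \tfrac{1}{2} A^{-1} (I + E A^{-1})^{-1} E A^{-1} + \tfrac14 A^{-1} E A^{-1}\left(I + \tfrac12 E A^{-1}\right)^{-1} E A^{-1},
\end{equation}
or some rearrangement thereof; I would verify the exact constants by a short direct computation matching $(I+N)^{-1}$ term by term. Granting such a decomposition, I take norms, apply the triangle inequality and submultiplicativity, use $\|E A^{-1}\| \le \|E\|\|A^{-1}\|$ throughout, and then take $\mathbb{E}_k$. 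The linear-in-$E$ terms each contribute $\tfrac{\sigma^A}{2}\|A^{-1}\|^2 / \sqrt{N_{S_k}}$, whose sum with the $(I+EA^{-1})^{-1}$ weight gives exactly $C_1/\sqrt{N_{S_k}}$ with $C_1 = \tfrac{\sigma^A}{2}\|A^{-1}\|^2(1 + \mathbb{E}_k[\|(I+EA^{-1})^{-1}\|])$; the quadratic-in-$E$ term contributes $\tfrac14 \|A^{-1}\|^3 \mathbb{E}_k[\|E\|^2 \|(I+\tfrac12 EA^{-1})^{-1}\|]$, and here I bound $\|E\|^2$ using $\mathbb{E}_k[\|E\|^2] \le (\sigma^A)^2/N_{S_k}$ (available from Assumption \ref{assumption_hess_mc}-type variance control; note $\mathbb{E}_k[\|E\|]^2 \le \mathbb{E}_k[\|E\|^2]$) to land on $C_2/N_{S_k}$.

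The main obstacle is twofold. First, getting the constants $1/2$, $1/2$, $1/4$ and the precise operator $(I + \tfrac12 E A^{-1})^{-1}$ to come out exactly as stated requires choosing the right algebraic split of the Neumann remainder; a naive split gives the correct $\sqrt{N_{S_k}}$ and $N_{S_k}$ scaling but messier constants, so I would need to reverse-engineer the identity that the authors used. Second, the quadratic term genuinely needs a bound on $\mathbb{E}_k[\|E\|^2]$ (or on $\mathbb{E}_k[\|E\|^2\|(I+\tfrac12 EA^{-1})^{-1}\|]$), not merely on $\mathbb{E}_k[\|E\|]$; one must be slightly careful that the hypotheses supply this — the Hessian variance assumption \ref{assumption_hess_mc} bounds $\|\mathbb{E}[(\nabla^2 F_i - \nabla^2 F)^2]\|$, which after the standard averaging argument of Lemma \ref{hessian_mc_lemma} yields $\mathbb{E}_k[\|E\|^2] \le (\sigma^A)^2/N_{S_k}$, so this is fine, but it is the step most easily glossed over. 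Everything else is routine norm bookkeeping with the triangle inequality and submultiplicativity of the operator norm.
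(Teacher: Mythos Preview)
Your strategy (write $A_{S_k}^{-1}-A^{-1}=-A^{-1}(I+EA^{-1})^{-1}EA^{-1}$ and then peel off a first-order piece via a Neumann-type remainder) is sound in spirit but differs from what the paper actually does, and the specific three-term identity you propose does not check out. Setting $N=EA^{-1}$, your claimed decomposition would require
\[
-(I+N)^{-1}N \;=\; -\tfrac12 N \;-\;\tfrac12(I+N)^{-1}N \;+\;\tfrac14 N\bigl(I+\tfrac12 N\bigr)^{-1}N,
\]
which after simplification forces $(I+N)^{-1}N^2=\tfrac12 N(I+\tfrac12 N)^{-1}N$; this fails for generic noncommuting $N$. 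So as written you would not be able to ``verify the exact constants by a short direct computation,'' and your plan reduces to guessing which identity the authors used.

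The paper's device is different and explains the otherwise mysterious $\tfrac12$, $\tfrac14$, and $(I+\tfrac12 EA^{-1})^{-1}$: it inserts the trivial factor $(A_{S_k}+A)^{-1}(A_{S_k}+A)$ to write
\[
A_{S_k}^{-1}-A^{-1}\;=\;(A_{S_k}+A)^{-1}\,\bigl[(A_{S_k}+A)(A_{S_k}^{-1}-A^{-1})\bigr].
\]
The first factor is $(2A+E)^{-1}$, and a Woodbury step around $2A$ gives
\[
(2A+E)^{-1}=\tfrac12 A^{-1}-\tfrac14 A^{-1}\bigl(I+\tfrac12 EA^{-1}\bigr)^{-1}EA^{-1},
\]
which is where the $\tfrac12$, $\tfrac14$, and the operator $(I+\tfrac12 EA^{-1})^{-1}$ come from. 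The second factor simplifies to $AA_{S_k}^{-1}-A_{S_k}A^{-1}=-\bigl(I+(I+EA^{-1})^{-1}\bigr)EA^{-1}$, which supplies the $(1+\mathbb{E}_k[\|(I+EA^{-1})^{-1}\|])$ appearing in $C_1$. Multiplying the two norm bounds and collecting the $1/\sqrt{N_{S_k}}$ and $1/N_{S_k}$ contributions yields the stated $C_1,C_2$. Note that the paper's argument only uses the first-moment hypothesis $\mathbb{E}_k[\|E\|]\le\sigma^A/\sqrt{N_{S_k}}$ throughout (the $1/N_{S_k}$ term arises as a product of two $1/\sqrt{N_{S_k}}$ factors, not from a second-moment bound), so your concern about needing $\mathbb{E}_k[\|E\|^2]$ is specific to your route and not to the lemma as stated.

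In short: your approach could be made to work with a correct second-order resolvent identity, but the one you wrote is wrong, and the paper's symmetric-sum factorization is the intended mechanism that makes the constants fall out without guesswork.
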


\begin{proof}
We have the following equality
\begin{equation}
\label{three_term_generic_inverse_mc}
  A_{S_k}^{-1} - A^{-1} = (A_{S_k} + A)^{-1}(A_{S_k} + A)(A_{S_k}^{-1} - A^{-1}).
\end{equation}
The conditional expectation with respect to $S_k$ of the norm of the first term can be bounded using the Woodbury formula, triangle inequality and Cauchy-Schwarz inequality:
\begin{subequations}
\begin{align}
    (A_{S_k} + A)^{-1} = (2A + E_\text{MC}^A)^{-1} = \frac{1}{2}A^{-1} - \frac{1}{4}A^{-1}(I + E_\text{MC}^AA^{-1})^{-1}E_\text{MC}^A A^{-1}\\
    \mathbb{E}_k[\|(A_{S_k} + A)^{-1}\|] \leq \frac{1}{2}\|A^{-1}\| + \frac{1}{4}\|A^{-1}\|^2 \frac{\sigma^A}{\sqrt{N_{S_k}}}\mathbb{E}_k\left[\left\|\left(I + \frac{1}{2}E_\text{MC}^AA^{-1}\right)^{-1}\right\|\right]
\end{align}
\end{subequations}
Multiplying out the last two terms in \eqref{three_term_generic_inverse_mc} we have
\begin{subequations}
\begin{equation}
    (A_{S_k} + A)(A_{S_k}^{-1} - A^{-1}) = I - A_{S_k}A^{-1} +AA_{S_k}^{-1} - I = AA_{S_k}^{-1} - A_{S_k}A^{-1}.
\end{equation}
Decomposing the stochastic approximation $A_{S_k} = A + E_\text{MC}^A$, and employing the Woodbury formula we get the following bounds:

\begin{align}
    &AA_{S_k}^{-1} = A(A+E_\text{MC}^A)^{-1} = A(A^{-1} - A^{-1}(I + E_\text{MC}^AA^{-1})^{-1}E_\text{MC}A^{-1}) \nonumber \\
    &          \qquad\qquad\qquad\qquad\qquad\qquad                          = I -(I+E_\text{MC}^AA^{-1})^{-1}E_\text{MC}A^{-1} \\
    &A_{S_k}A^{-1} = (A+E_\text{MC}^A)A^{-1} =   I + E_\text{MC}^AA^{-1}              \\
    &AA_{S_k}^{-1} - A_{S_k}A^{-1} = -(I + (I+E_\text{MC}^AA^{-1})^{-1})E_\text{MC}A^{-1}    .                
\end{align}
Which leads to the following bound of the conditional expectation of the norm:
\begin{align}
    \mathbb{E}_k[\|(A_{S_k} + A)(A_{S_k}^{-1} - A^{-1}) \|] &\leq \|A^{-1}\|\frac{\sigma^A}{\sqrt{N_{S_k}}}\mathbb{E}_k[\|I + (I+E_\text{MC}^AA^{-1})^{-1}\|] \\
        &\leq \|A^{-1}\|\frac{\sigma^A}{\sqrt{N_{S_k}}} \left(1 + \mathbb{E}_k[\|I+E_\text{MC}^AA^{-1}\|] \right)
\end{align}

\end{subequations}

By the Cauchy-Schwarz inequality we can bound the conditional expectation of the norm of \eqref{three_term_generic_inverse_mc}
\begin{subequations}
\begin{equation}
  \mathbb{E}_k[\|A_{S_k}^{-1} - A^{-1}\|] \leq \frac{C_1}{\sqrt{N_{S_k}}} + \frac{C_2}{N_{S_k}},
\end{equation}
with
\begin{align}
C_1 &= \frac{\sigma^A}{2}\|A^{-1}\|^2  (1 + \mathbb{E}_k[\|(I + E_\text{MC}^AA^{-1})^{-1}]) \\
C_2 &= \frac{(\sigma^A)^2}{4}\|A^{-1}\|^3  \mathbb{E}_k\left[\left\|\left(I +\frac{1}{2}E_\text{MC}^AA^{-1}\right)^{-1}\right\|\right].
\end{align}
\end{subequations}
\end{proof}

This allows us to trivially bound the Monte Carlo errors for stochastic Newton-like updates $p_k = A^{-1}_{S_k}g_{X_k}$.

\begin{lemma}{Monte Carlo Error for Stochastic Newton-like search direction}
\label{stochastic_newton_mc_lemma}

With the same assumptions as Lemma \ref{stochastic_inverse_mc_lemma}, additionally assume that there exists $v^g$, such that the stochastic right hand side (gradient-like term), $g_{X_k}$ has bounded Monte Carlo error: 
\begin{equation}
    \mathbb{E}_k[\|g_{X_k} - g\|] \leq \frac{v^g}{\sqrt{N_{S_k}}},
\end{equation}
the Monte Carlo error for a stochastic Newton-like update $p_k = A^{-1}_{S_k}g_{X_k}$ is bounded as follows:
\begin{subequations}
\begin{equation}
    \mathbb{E}_k[\|A^{-1}_{S_k}g_{X_k} - A^{-1}g\|] \leq \frac{C_0}{\sqrt{N_{X_k}}} + \frac{C_1}{\sqrt{N_{S_k}}}\mathbb{E}_k[\|g_{X_k}\|] +\frac{C_2}{N_{S_k}}\mathbb{E}_k[\|g_{X_k}\|],
\end{equation}
with
\begin{align}
C_0 &= v^g\|A^{-1}\| \\
C_1 &= \frac{\sigma^A}{2}\|A^{-1}\|^2  (1 + \mathbb{E}_k[\|(I + E_\text{MC}^AA^{-1})^{-1}]) \\
C_2 &= \frac{(\sigma^A)^2}{4}\|A^{-1}\|^3  \mathbb{E}_k\left[\left\|\left(I +\frac{1}{2}E_\text{MC}^AA^{-1}\right)^{-1}\right\|\right].
\end{align}
\end{subequations}

\end{lemma}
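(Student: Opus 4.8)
The plan is to reduce everything to Lemma~\ref{stochastic_inverse_mc_lemma} and Lemma~\ref{grad_mc_lemma} by a single additive splitting of the search-direction error. First I would write
\[
A^{-1}_{S_k}g_{X_k} - A^{-1}g = (A^{-1}_{S_k} - A^{-1})g_{X_k} + A^{-1}(g_{X_k} - g),
\]
which is an exact identity (expand the right-hand side and cancel the $A^{-1}g_{X_k}$ terms). Taking norms, the conditional expectation $\mathbb{E}_k$, and the triangle inequality then splits the bound into a ``gradient noise'' contribution $\mathbb{E}_k[\|A^{-1}(g_{X_k}-g)\|]$ and a ``matrix noise'' contribution $\mathbb{E}_k[\|(A^{-1}_{S_k}-A^{-1})g_{X_k}\|]$.

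For the gradient-noise term, sub-multiplicativity of the operator norm gives $\mathbb{E}_k[\|A^{-1}(g_{X_k}-g)\|] \le \|A^{-1}\|\,\mathbb{E}_k[\|g_{X_k}-g\|]$, and the assumed Monte Carlo bound on the right-hand side, $\mathbb{E}_k[\|g_{X_k}-g\|]\le v^g/\sqrt{N_{X_k}}$, yields exactly $C_0/\sqrt{N_{X_k}}$ with $C_0 = v^g\|A^{-1}\|$.

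For the matrix-noise term, I would again use sub-multiplicativity, $\|(A^{-1}_{S_k}-A^{-1})g_{X_k}\| \le \|A^{-1}_{S_k}-A^{-1}\|\,\|g_{X_k}\|$, and then separate the Hessian-batch randomness from the gradient-batch randomness so that $\mathbb{E}_k[\|A^{-1}_{S_k}-A^{-1}\|\,\|g_{X_k}\|] \le \mathbb{E}_k[\|A^{-1}_{S_k}-A^{-1}\|]\,\mathbb{E}_k[\|g_{X_k}\|]$; Lemma~\ref{stochastic_inverse_mc_lemma} then bounds $\mathbb{E}_k[\|A^{-1}_{S_k}-A^{-1}\|] \le C_1/\sqrt{N_{S_k}} + C_2/N_{S_k}$ with precisely the $C_1,C_2$ in the statement. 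Adding the two contributions gives the claim.

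The main obstacle --- and really the only nontrivial point --- is justifying the factorization $\mathbb{E}_k[\|A^{-1}_{S_k}-A^{-1}\|\,\|g_{X_k}\|] = \mathbb{E}_k[\|A^{-1}_{S_k}-A^{-1}\|]\,\mathbb{E}_k[\|g_{X_k}\|]$, which presumes the Hessian subsample $S_k$ and the gradient subsample $X_k$ are drawn independently (consistent with the notation of Section~\ref{section:notation}, where $X_k$ and $S_k$ are treated as separate batches). If one does not wish to assume independence, the same conclusion follows with slightly larger constants by instead applying Cauchy--Schwarz and replacing $\mathbb{E}_k[\|g_{X_k}\|]$ and $\mathbb{E}_k[\|A^{-1}_{S_k}-A^{-1}\|]$ by their $L^2$ counterparts (the latter controlled by squaring the bound of Lemma~\ref{stochastic_inverse_mc_lemma}); everything else in the argument is mechanical.
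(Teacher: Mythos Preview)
Your approach is essentially identical to the paper's: the same additive splitting $A_{S_k}^{-1}g_{X_k}-A^{-1}g=(A_{S_k}^{-1}-A^{-1})g_{X_k}+A^{-1}(g_{X_k}-g)$, followed by the triangle inequality, sub-multiplicativity for each piece, and an appeal to Lemma~\ref{stochastic_inverse_mc_lemma}. The paper attributes the factorization $\mathbb{E}_k[\|A_{S_k}^{-1}-A^{-1}\|\,\|g_{X_k}\|]\le\mathbb{E}_k[\|A_{S_k}^{-1}-A^{-1}\|]\,\mathbb{E}_k[\|g_{X_k}\|]$ to ``Cauchy--Schwarz'' without further comment; your discussion of independence of $S_k$ and $X_k$ (or a genuine Cauchy--Schwarz argument with $L^2$ moments) is in fact more careful on this point than the paper itself.
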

\begin{proof}
We can decompose the search direction as follows
\begin{subequations}
\begin{equation}
    A_{S_k}^{-1}g_{X_k} - A^{-1}g = A^{-1}g_{X_k} - A^{-1}g + A_{S_k}^{-1}g_{X_k} - A^{-1}g_{X_k},
\end{equation}
which leads to the following bound via the triangle inequality and Cauchy-Schwarz:
\begin{align}
\mathbb{E}_k[\|A_{S_k}^{-1}g_{X_k} - A^{-1}g \|] &\leq \mathbb{E}_k[\|A^{-1}(g_{X_k} - g)\|] + \mathbb{E}_k[\|(A_{S_k}^{-1} - A^{-1})g_{X_k}\|] \\
    &\leq \|A^{-1}\|\frac{v^g}{\sqrt{N_{X_k}}} + \mathbb{E}_k[\|A_{S_k}^{-1} - A^{-1}\|]\mathbb{E}_k[\|g_{X_k}\|].
\end{align}
\end{subequations}
The result follows from Lemma \ref{stochastic_inverse_mc_lemma}.
\end{proof}

\end{document}